\newtheorem{theorem}{Theorem}
\newtheorem{Lemma}[theorem]{Lemma}
\journal{******}
\begin{document}

\begin{frontmatter}
\title{Existence results for
Kazdan-Warner type  equations on graphs}

\author{Pengxiu Yu}
\ead{Pxyu@ruc.edu.cn}


\address{School of Mathematics,
Renmin University of China, Beijing 100872, P. R. China}

\begin{abstract}
In this paper, motivated by the work of Huang-Lin-Yau (Commun. Math. Phys. 2020),
Sun-Wang (Adv. Math. 2022) and Li-Sun-Yang (Calc. Var. Partial Differential Equations 2024),
we investigate the existence of Kazdan-Warner type equations on a finite connected graph, based on the theory of Brouwer degree. Specifically, we consider the equation
\begin{equation*}
-\Delta u=h(x)f(u)-c,
\end{equation*}
where $h$ is a real-valued function defined on the vertex set $V$, $c\in\mathbb{R}$ and
\begin{equation*}
f(u)=
\left(1-\displaystyle\frac{1}{1+u^{2n}}\right)e^u
\end{equation*}
with $n\in \mathbb{N}^*$.  Different from the previous studies,
the main difficulty in this paper is to show that the corresponding equation has only
three constant solutions, based on delicate
analysis and the connectivity of graphs,
which have not been extensively explored in previous literature.
\end{abstract}

\begin{keyword}
Kazdan-Warner equation\sep Brouwer degree\sep finite graphs
\MSC[2020]
35A16 \sep 35R02
\end{keyword}

\end{frontmatter}

\titlecontents{section}[0mm]
                       {\vspace{.2\baselineskip}}
                       {\thecontentslabel~\hspace{.5em}}
                        {}
                        {\dotfill\contentspage[{\makebox[0pt][r]{\thecontentspage}}]}
\titlecontents{subsection}[3mm]
                       {\vspace{.2\baselineskip}}
                       {\thecontentslabel~\hspace{.5em}}
                        {}
                       {\dotfill\contentspage[{\makebox[0pt][r]{\thecontentspage}}]}

\setcounter{tocdepth}{2}



\numberwithin{equation}{section}

\section{Introduction}\let\thefootnote\relax\footnotetext{
On behalf of all authors, the corresponding author states that there is no conflict of interest.}

Let $(\Sigma,g)$ be a two dimensional compact Riemannian manifold
and let $g$ and $\tilde{g}=e^{2\phi}g$ be two conformal metrics, where $\phi$  is a smooth function.   The corresponding Gaussian curvatures are denoted
by $K$ and $\widetilde{K}$, respectively.  Then we have the following equation
\begin{equation}\label{gs}
\Delta_g \phi=K-\widetilde{K}e^{\phi},
\end{equation}
where $\Delta_g$ is  the Laplace–Beltrami operator associated with the metric $g$. Let
$\Delta_g \psi=K-\overline{K}$, with
\begin{equation*}
\overline{K}=\frac{1}{\mathrm{vol}(\Sigma)}\displaystyle\int_{\Sigma} K dv_g.
\end{equation*}
Setting $u=2(\phi-\psi)$,  the equation (\ref{gs}) can be  transformed into
\begin{equation}\label{gs1}
\Delta_g u=2\overline{K}-(2\widetilde{K}e^{2\psi})e^u.
\end{equation}
To proceed, the general case of  the equation (\ref{gs1}) is of independent interest and
takes the following specific form:
\begin{equation}\label{kw0}
\Delta_g u=c-he^u,
\end{equation}
where $c$ is a constant and $h$ is some
prescribed function. The solvability of
(\ref{kw0})  depends on the sign of $c$.
To proceed,
let $\bar{h}$  denote the average value of $h$ on the manifold
$\Sigma$ and
Kazdan-Warner \cite{kazdanwarner}
provided satisfactory characterizations for the solvability of the Kazdan-Warner equation (\ref{kw0}):

(i) if $c=0$ and $h\nequiv0$, then (\ref{kw0})  has a solution if and only if
$h$ changes sign and $\bar{h}<0$;

(ii) if $c>0$, then (\ref{kw0})  has a solution if and only if the set $\{x:h>0\}$ is
not empty;

(iii) if $c<0$  and (\ref{kw0}) has a solution, then $\bar{h}<0$. For $\bar{h}<0$, there exists a constant $-\infty\leq c_h<0$ such that (\ref{kw0}) has a solution for
any $c_h<c<0$ and there is no solution for any $c<c_h$. Furthermore, $c_h=-\infty$ if and only if $h\leq0$ and $h\nequiv0$ on $\Sigma.$

We refer the reader to  \cite{Caffarellivortex,ChenScalar,chenliqua,chenligau,ding,dingjost,
NolascoNontopolo,RicciardiVortices}  for further information on the Kazdan-Warner problem, which has been extensively studied in these works.

In recent years,  various topics in the analysis on graphs have received a lot of attention. Grigor’yan-Lin-Yang \cite{yangkw} first researched  the Kazdan–Warner equation (\ref{kw0}) on the finite graphs. Namely, they considered the equation $\Delta u=c-he^u$, where $\Delta$ is discrete graph Laplacian
with $c\in\mathbb{R}$ and $h$ is a function defined on the vertices. Subsequently,
Ge \cite{geneggative}, Liu-Yang \cite{liushuangnegetive} and  Zhang-Chang \cite{zhangxiaonegetive}
studied the Kazdan-Warner equation on graphs for the negative case; Later, Ge \cite{geinfinite} generalized
the existence result to  infinite graphs
and Keller-Schwarz \cite{kellercompactifiable}
extended the equation to canonically compactifiable graphs. Moreover,  some other important works on graphs can be found in \cite{gaoChern,yangyamabe, yangnonlinear,huagroundstate,linCalculus,
pansign,yangNormalized,
shaologarithmic}.

In this paper, inspired by the works
of Huang-Lin-Yau \cite{linyau}, Li-Sun-Yang \cite{lidegree} and Sun-Wang \cite{sundegree},
it is interesting to consider the following
Kazdan-Warner type equation on graphs via the theory of Brouwer degree.
\begin{equation}\label{kw1}
-\Delta u=h(x)f(u)-c,
\end{equation}
where $h$ is a real function on $V$ and $f$ satisfies the asymptotic
condition $\displaystyle\frac{f(t)}{e^t}\rightarrow a>0$ as $t\rightarrow\pm\infty$.
Note that when computing the Brouwer degree, it is essential to identify the constant solutions of the corresponding equation.
However, the difficulty of the problem (\ref{kw1}) lies in that there is no
explicit expression for $f(u)$, despite the fact that $f(u)$  can be written in terms of a limit form involving $e^u$. There are various functions that satisfy this asymptotic condition,   making it challenging to determine the constant solutions.
For this reason, we first take a specific form of $f(u)$ to analyze the equation (\ref{kw1}).
Precisely, we
consider the equation
\begin{equation}\label{function}
-\Delta u(x)=h(x)\left(1-\displaystyle\frac{1}{1+u^{2n}}\right)e^u -c:= Q(u) ,
\end{equation}
where $h$ and $c$ are the same as those in equation (\ref{kw1}) with  $n\in \mathbb{N}^*$.
By calculating the Brouwer degree of (\ref{function}) in a case-by-case analysis, and utilizing the homotopic invariance along with  Kronecker existence theorem, we establish the existence of solutions to  (\ref{function}).
However,  even for the easy case in (\ref{function}), there are significant difficulties in our proof.

The  main difficulties are listed as below. Firstly, the theory of Brouwer
degree implies that finding  the constant solutions to the corresponding  equation  is a crucial  point.
Nevertheless, it  is not easy to determine the constant solutions of (\ref{function}) even in this particular forms. In order to have a clearer understanding of the equation (\ref{function}),
we simulate the figure of $Q(u)$ using the computer when $n=3$, $h=1$ and $c=0.06$,
that is,
\begin{figure}[H]
  \centering
  \includegraphics[width=10cm,height=6cm]{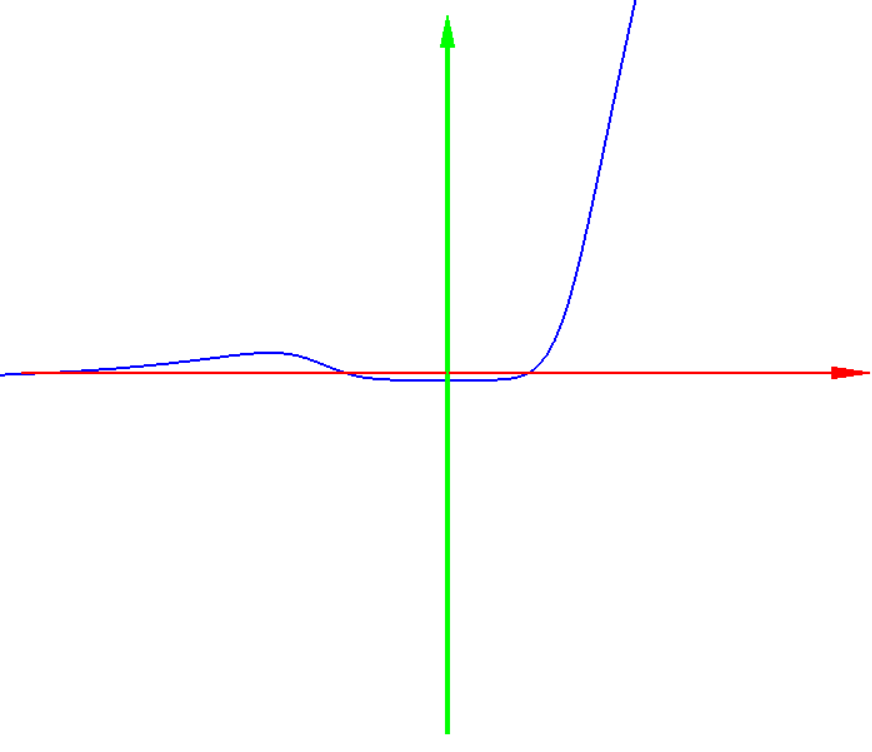}
  \caption{n=3, h=1, c=0.06}
  \label{figutrintroduction}
\end{figure}
\noindent Figure \ref{figutrintroduction} shows that if we take specific parameters, then  there exist three constant solutions to the corresponding equation of (\ref{function}), one of which is close to $-\infty$ and the remaining two solutions are close to $0$.
This is totally different from the previous work of Sun-Wang \cite{sundegree}, where the constant
solution is unique and can be obtained by a direct calculation.
Furthermore, although we cannot obtain an explicit expression for the constant solutions, the precise estimation of these constant solutions can be derived through careful analysis.
For the better understanding, we draw a schematic diagram of $Q(u)$ for a special case. Namely, when $n\in \mathbb{N}^*,  h=1,c=\varepsilon$, we have
\begin{figure}[H]
  \centering
  \includegraphics[width=10cm,height=6cm]{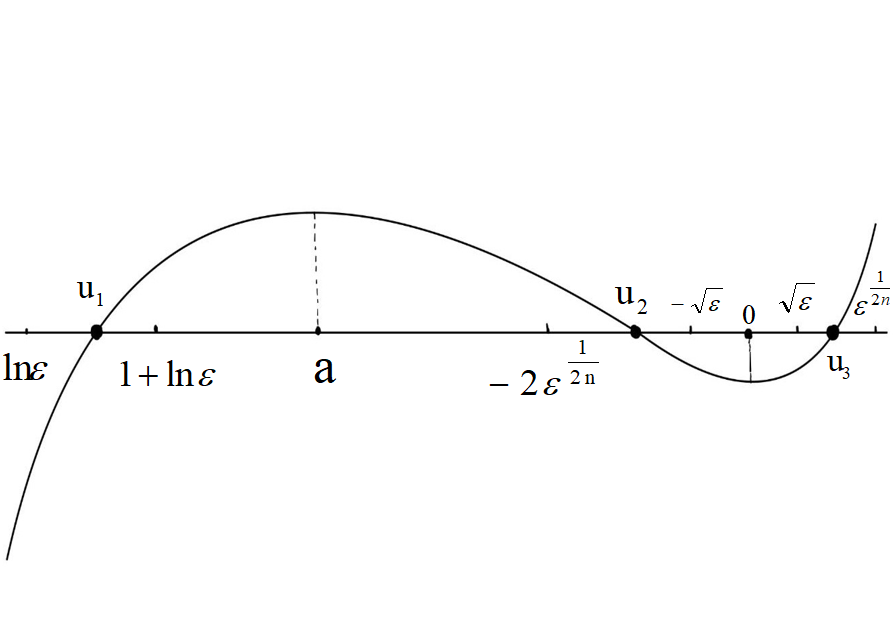}
 \caption{Q(u)}
\label{fintroduction}
\end{figure}

Figure \ref{fintroduction} implies that, under the given conditions,  there exist three distinct constant solutions for the function $Q(u)$. To proceed,  we denote these constant solutions as  $u_1$, $u_2$ and $u_3$, respectively,  and derive that
\begin{equation*}
u_1\in(\ln\varepsilon, 1+\ln\varepsilon),
\hspace{0.5cm}
u_2\in(-2\varepsilon^{\frac{1}{2n}}, -\sqrt{\varepsilon}),
\hspace{0.5cm}
u_3\in(\sqrt{\varepsilon},\varepsilon^{\frac{1}{2n}} ).
\end{equation*}
Moreover, we observe that  $u_1$ is close to $-\infty$ as $\varepsilon\rightarrow0^+$, while $u_2$ and $u_3$ are both close to  $0$, with  $u_2\leq 0$ and $u_3\geq0$.

Secondly, a significant challenge we face is to demonstrate that  the  corresponding equation of (\ref{function}) only has three constant solutions. Different from the lemma 4.4 derived by Huang-Lin-Yau \cite{linyau},  our paper
needs to obtain a more precise estimation of $u$. For more information, see section 4.
Notably, we use the connectivity of graphs, an uncommon approach in other literature. By utilizing the method outlined in our paper, we can deduce that the Chern-Simons Higgs model explored in \cite{lidegree} also has only two constant solutions, making our findings particularly interesting.

Finally, according to the above analysis, we calculate the corresponding Brouwer degree. Unlike the method described in \cite{lidegree,sundegree},
we are unable to  substitute the constant solutions into the calculations directly as we cannot obtain the explicit expression of the constant solutions in our paper.
Fortunately, we can derive the desired results by analyzing the monotonicity of $DQ(u)$.

Moreover, in this paper, demonstrating that the equation possesses only constant solutions is a crucial step,
which is more difficult than  previous
studies \cite{lidegree,sundegree} and of independent interest in other
equations.
For example, in \cite{weijunchengs6},
Gui-Li-Wei-Ye  conjectured that
$J_{\alpha}(u)\geq C(\alpha)$ can be chosen to be $0$, where the functional $J_{\alpha}(u)$  is respect to a $Q$-curvature-type equation. They proved that axially symmetric solutions to the $Q$-curvature type problem must be constants, then the conjecture is valid,
which is associated with Liouville theorem.
Hence, it is meaningful to study the related issues of constant solutions. These solutions provide insights into the nature and behavior of equations.


The remaining part of this paper is organized as follows: In Section 2,  we give some preliminaries  and state our main results.
In Section 3, we study the blow-up behaviour for the Kazdan-Warner type equation (\ref{kw1}). In Section 4, we calculate the
Brouwer degree for the equation (\ref{function}). Throughout this paper,
we will frequently use the notation $u(x)\in [a,b]$, which means
$u(x_i)\in [a,b]$ for any $i=1,\cdots,m$ with  $\#V=m$.

\section{Settings and main results}
Let $G=(V,E)$ be a finite connected  graph, where $V$  denotes the set of all vertices
and $E$ denotes  the set of  all edges. Let
 $\mu: V\rightarrow \mathbb{R}^+$ be a finite measure and we assume positive
symmetric weights $w_{xy}=w_{yx}$ on edges
$xy\in E$. The Laplace operator acting on $u$ reads as
\begin{equation*}\label{laplace}
\Delta u(x)=\frac{1}{\mu(x)}
\displaystyle\sum_{y\sim x}
w_{xy}(u(y)-u(x)),
\end{equation*}
where $y\sim x$ means $xy\in E$.
The corresponding gradient  is defined as
\begin{equation*}
\nabla u(x)
=
\left(
\sqrt{ \frac{\omega_{xy_1}}{2\mu(x)} } ( u(y_1)-u(x)  ) ,
\cdots,
\sqrt{\frac{\omega_{xy_l}}{2\mu(x)} }  ( u(y_l)-u(x)  )
\right),
\end{equation*}
where $l$ denotes the number of all neighbours of $x$. The integral of function $g$
is denoted by
\begin{equation*}
\int_V g d\mu
=\sum_{x\in  V}\mu(x)g(x),
\end{equation*}
and the average of this integral is
\begin{equation*}
\bar{g}
=
\frac{1}{|V|}\int_V g d\mu
=\frac{1}{|V|}\sum_{x\in  V}\mu(x)g(x),
\end{equation*}
where $|V|=\sum_{x\in V}\mu(x)$.

Next, we introduce some important
lemmas about topological degree.

\begin{Lemma}[Homotopic invariance \cite{chang}]\label{homo}
If $\phi: \bar{B}_R\times[0,1]\rightarrow V^{\mathbb{R}}$ is continuous and $p\notin(\partial B_R \times[0,1])$, then
\begin{equation*}
\mathrm{deg}(\phi({\cdot, t}), B_R, p)
=\mathrm{constant}.
\end{equation*}
\end{Lemma}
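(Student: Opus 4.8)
The plan is to prove that the integer-valued function $t \mapsto \deg(\phi(\cdot, t), B_R, p)$ is constant on $[0,1]$; since it takes values in $\mathbb{Z}$ and $[0,1]$ is connected, it suffices to compare the degrees at two arbitrary parameters. Here $V^{\mathbb{R}} \cong \mathbb{R}^m$ with $m = \#V$, so this is the genuinely finite-dimensional Brouwer degree. First I would record the quantitative form of the hypothesis: because $\partial B_R \times [0,1]$ is compact and $\phi$ is continuous with $p \notin \phi(\partial B_R \times [0,1])$, the number
\[
\delta := \min_{(x,t) \in \partial B_R \times [0,1]} |\phi(x,t) - p| > 0
\]
is strictly positive, and this uniform gap is exactly what prevents solutions of $\phi(x,t) = p$ from escaping through $\partial B_R$ as $t$ varies.

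Next I would reduce to a smooth homotopy for which $p$ is a regular value. Using a mollification/Weierstrass approximation, choose $\psi \in C^\infty(\bar{B}_R \times [0,1], \mathbb{R}^m)$ with $\|\phi - \psi\|_\infty < \delta$. Since the two maps stay within distance $\delta$ while $\phi(\cdot, t)$ keeps $\partial B_R$ at distance $\geq \delta$ from $p$, the defining approximation property of the Brouwer degree gives $\deg(\phi(\cdot, t), B_R, p) = \deg(\psi(\cdot, t), B_R, p)$ for every $t$, so it is enough to treat $\psi$. By Sard's theorem I may assume, after an arbitrarily small further perturbation (or a small shift of $p$ inside its $\delta$-ball), that $p$ is a regular value of the full map $\psi \colon \bar B_R \times [0,1] \to \mathbb{R}^m$ as well as of the two slices $\psi(\cdot, 0)$ and $\psi(\cdot, 1)$.

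The core step is the compact $1$-manifold argument. Because $p$ is a regular value of $\psi$, the preimage $M := \psi^{-1}(p)$ is a compact $1$-dimensional manifold with boundary; the gap $\delta$ forces $M$ to avoid $\partial B_R \times [0,1]$, so $\partial M \subset B_R \times \{0\} \cup B_R \times \{1\}$. Classifying $M$ as a finite disjoint union of circles and arcs, each arc joins exactly two boundary points, and tracking the induced orientation and Jacobian signs along each arc shows that the signed count of endpoints on the slice $t=1$ equals that on the slice $t=0$. Via the regular-value formula $\deg(\psi(\cdot, s), B_R, p) = \sum_{x \in \psi(\cdot,s)^{-1}(p)} \mathrm{sign}\det D_x\psi(x,s)$, this signed boundary count is precisely $\deg(\psi(\cdot, 1), B_R, p) - \deg(\psi(\cdot, 0), B_R, p)$, which therefore vanishes. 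Applying the same reasoning to the restriction of $\phi$ to $\bar B_R \times [0, t]$ for an arbitrary $t$ yields $\deg(\phi(\cdot, t), B_R, p) = \deg(\phi(\cdot, 0), B_R, p)$, i.e. constancy in $t$.

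I expect the main obstacle to be the orientation bookkeeping in the $1$-manifold step, namely matching the boundary orientation of each arc of $M$ to the sign of the slice Jacobian so that the endpoints contribute with the correct signs, together with arranging the regularity reductions simultaneously (keeping $p$ a regular value of $\psi$ and of both end-slices while preserving the $\delta$-gap on $\partial B_R$). Everything else is either compactness or the standard approximation and regular-value definition of the degree; since the statement is quoted from \cite{chang}, one may alternatively invoke it directly.
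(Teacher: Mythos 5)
Your argument is correct, but note that the paper does not prove this lemma at all: it is quoted verbatim from Chang's monograph \cite{chang} as a known property of the Brouwer degree (here on $V^{\mathbb{R}}\cong\mathbb{R}^m$, $m=\#V$), so there is no in-paper proof to match. What you supply is the standard Milnor-style proof: the compactness gap $\delta>0$ on $\partial B_R\times[0,1]$, smooth approximation within $\delta$, Sard's theorem to make $p$ a regular value of the homotopy and of both end slices, and the classification of the compact $1$-manifold $\psi^{-1}(p)$ into circles and arcs to equate the signed counts at $t=0$ and $t=1$. This is a legitimate and self-contained route, and it correctly repairs the paper's typo (the hypothesis should read $p\notin\phi(\partial B_R\times[0,1])$, which is how you use it). Two details deserve explicit care if you write this out in full. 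First, in the arc-counting step, arcs need not join the two slices: an arc may have both endpoints in $B_R\times\{0\}$ or both in $B_R\times\{1\}$, and the orientation argument must show such pairs carry opposite Jacobian signs and cancel, while arcs running from $t=0$ to $t=1$ carry equal signs; your phrasing gestures at this but the case split is where the bookkeeping actually lives. Second, your appeal to the ``defining approximation property'' to pass from $\phi$ to $\psi$ presupposes a construction of the degree for continuous maps whose well-definedness under smooth approximation is itself proved by the smooth homotopy-invariance you are establishing; this is not circular provided you order the development as (i) smooth invariance via the $1$-manifold argument, then (ii) the continuous extension, but that ordering should be stated. Since the lemma is a textbook fact, invoking \cite{chang} directly, as the paper does, remains the economical choice.
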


\begin{Lemma}[Kronecker's  existence theorem \cite{chang}]\label{kro}
If $P\notin F_{\rho, g}(\partial B_R )$ and
$\mathrm{deg}(F_{\rho, g}, B_R, p)\neq 0$, then
\begin{equation*}
F_{\rho, g}^{-1}(p)\neq \emptyset.
\end{equation*}
\end{Lemma}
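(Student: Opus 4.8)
The plan is to establish this classical solution property of the Brouwer degree by contraposition, using that $V^{\mathbb{R}} \cong \mathbb{R}^m$ (since $\#V = m < \infty$), so that $F_{\rho,g}\colon \bar{B}_R \to V^{\mathbb{R}}$ is an ordinary continuous map between finite-dimensional spaces and $\mathrm{deg}(F_{\rho,g}, B_R, p)$ is the usual Brouwer degree. I would prove the contrapositive: if $F_{\rho,g}^{-1}(p) = \emptyset$, then $\mathrm{deg}(F_{\rho,g}, B_R, p) = 0$, which immediately forces the stated conclusion.

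First I would note that if $F_{\rho,g}(x) \neq p$ for every $x \in B_R$, then together with the hypothesis $p \notin F_{\rho,g}(\partial B_R)$ we have $p \notin F_{\rho,g}(\bar{B}_R)$. As $\bar{B}_R$ is compact and $F_{\rho,g}$ continuous, the quantity $\delta := \mathrm{dist}\bigl(p, F_{\rho,g}(\bar{B}_R)\bigr)$ is strictly positive.

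Next I would reduce to a smooth map evaluated at a regular value. By mollification (or the Stone--Weierstrass theorem) choose $\widetilde{F} \in C^1(\bar{B}_R, \mathbb{R}^m)$ with $\sup_{\bar{B}_R} |\widetilde{F} - F_{\rho,g}| < \delta/2$. For the straight-line homotopy $\phi_t := (1-t)F_{\rho,g} + t\widetilde{F}$ one computes on $\partial B_R$ that $|\phi_t(x) - p| \geq \delta - \delta/2 = \delta/2 > 0$, so $\phi_t$ avoids $p$ on the boundary and Lemma \ref{homo} gives $\mathrm{deg}(F_{\rho,g}, B_R, p) = \mathrm{deg}(\widetilde{F}, B_R, p)$. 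Since the degree is locally constant in its target on the complement of $\widetilde{F}(\partial B_R)$, and regular values of $\widetilde{F}$ are dense by Sard's theorem, I may pick a regular value $q$ with $|q - p| < \delta/2$; this keeps $q \notin \widetilde{F}(\bar{B}_R)$ and $\mathrm{deg}(\widetilde{F}, B_R, p) = \mathrm{deg}(\widetilde{F}, B_R, q)$. The analytic definition of the degree then yields
\[
\mathrm{deg}(\widetilde{F}, B_R, q) = \sum_{x \in \widetilde{F}^{-1}(q) \cap B_R} \mathrm{sgn}\,\det D\widetilde{F}(x) = 0,
\]
the sum being empty because $q$ has no preimage in $\bar{B}_R$. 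Chaining the three equalities gives $\mathrm{deg}(F_{\rho,g}, B_R, p) = 0$, contradicting the hypothesis, so $F_{\rho,g}^{-1}(p) \neq \emptyset$.

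The main obstacle is not a single computation but the simultaneous control of all the approximation steps so that one never leaves the regime where $p$ (or its regular-value surrogate $q$) has empty preimage: the smooth approximation must be close enough to preserve the empty preimage set and to make the linear homotopy boundary-admissible, while the perturbation to a regular value must be small enough not to create spurious solutions. Once this bookkeeping is in place the signed count vanishes trivially, and the Kronecker existence property follows.
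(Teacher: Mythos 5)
Your proposal is correct, but note that the paper itself offers no proof of this lemma: it is quoted verbatim as a known result from \cite{chang}, so there is no internal argument to compare against. What you have written is the standard textbook proof of the Kronecker existence property (prove the contrapositive: empty preimage plus compactness gives a positive distance $\delta$, smooth approximation within $\delta/2$ plus the straight-line homotopy and Lemma \ref{homo} transfers the degree, Sard's theorem supplies a nearby regular value $q$ still outside the image, and the signed-count formula over the empty set $\widetilde{F}^{-1}(q)$ gives degree zero), and the quantitative bookkeeping is right: on $\partial B_R$ one has $|\phi_t(x)-p|\geq\delta-\delta/2>0$, and for $x\in\bar{B}_R$ one has $|\widetilde{F}(x)-q|>\delta-\delta/2-\delta/2=0$, so no spurious preimages appear. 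The one caveat worth flagging is a mild circularity risk depending on the foundational setup: if, as in \cite{chang}, the degree of a merely continuous map at a possibly singular value is \emph{defined} through exactly these approximations (smooth $\widetilde{F}$ close to $F$, regular value $q$ close to $p$, well-definedness via homotopy invariance and local constancy in the target), then your argument is essentially a reconstruction of that definition together with its consistency lemmas rather than an independent deduction; you are implicitly using local constancy of $\mathrm{deg}(\widetilde{F},B_R,\cdot)$ on components of $\mathbb{R}^m\setminus\widetilde{F}(\partial B_R)$, which must already be available. Since the paper treats degree theory as a black box and your steps are the standard ones, this is acceptable, and your proof supplies precisely what the citation delegates to the reference.
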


Assume $f(u)=ae^u+g(u)e^u>0$ and $a>0$ is a positive constant. The function $g$ is uniformly bounded with respect to $u$ and
$g(u)\rightarrow0$ as $u\rightarrow\pm\infty$. Now, we consider the Kazdan-Warner type equation $-\Delta u=h(x)f(u)-c$, and our first result is

\begin{theorem}\label{thm1}
Let $G=(V,E)$ be a finite connected graph with positive measure $\mu$ and positive symmetric weights $w_{xy}$.
Assume $h\in V^{\mathbb{R}}$ and $c\in \mathbb{R}$ satisfy:\\
\noindent(1) if $c>0$, then there exists $x_0\in V$ such that $h(x_0)>0$,

\noindent(2) if $c=0$, then $h$ changes sign and $\int_V h d\mu<0$,

\noindent(3) if $c<0$, then there exists $x_0\in V$ such that $h(x_0)<0$.

\noindent Then there exists a constant $C$ depending on $h,c,G$ such that every solution
to
\begin{equation}\label{kw}
-\Delta u=h(x)f(u)-c
\end{equation}
satisfies
\begin{equation*}
\max_{x\in V} |u(x)|\leq C.
\end{equation*}
\end{theorem}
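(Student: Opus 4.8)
The plan is to establish a uniform $L^\infty$ bound on solutions of $-\Delta u = h(x)f(u) - c$ by a blow-up/maximum-principle argument, treating the three sign regimes of $c$ through the structural assumptions $(1)$–$(3)$. The key structural facts I would exploit are: on a finite graph $V$, the quantity $\int_V \Delta u\, d\mu = 0$ (the Laplacian integrates to zero), and the strong maximum principle, namely that at a point $x_{\max}$ where $u$ attains its maximum one has $\Delta u(x_{\max}) \le 0$, and at a minimizer $x_{\min}$ one has $\Delta u(x_{\min}) \ge 0$. Combined with the hypothesis $f(u) = ae^u + g(u)e^u > 0$ with $a > 0$ and $g$ bounded with $g(\pm\infty) = 0$, these give both upper and lower control.

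First I would derive the \emph{average identity}. Integrating \eqref{kw} over $V$ and using $\int_V \Delta u\, d\mu = 0$ yields
\begin{equation*}
\int_V h(x) f(u(x))\, d\mu = c\, |V|.
\end{equation*}
Since $f > 0$ everywhere and $f(t) \sim a e^t$, this identity links the sign of $c$ to where $h$ must be positive or negative and controls the total mass of $f(u)$. For $c > 0$ this forces $h$ to be positive somewhere (consistent with (1)) and bounds $\int_V h^+ f(u)$ from below; for $c < 0$ it forces $h$ negative somewhere; for $c = 0$ it forces the positive and negative contributions to balance. The next step is the \emph{upper bound}: evaluate \eqref{kw} at a maximizer $x_{\max}$. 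There $-\Delta u(x_{\max}) \ge 0$, so $h(x_{\max}) f(u(x_{\max})) \ge c$. If $u(x_{\max})$ were very large, $f(u(x_{\max})) \approx a e^{u(x_{\max})}$ would blow up, and one uses the average identity together with the connectivity-based elliptic estimate $\max_V u - \min_V u \le C \max_V |\Delta u|$ to convert pointwise control at one vertex into control on all of $V$, preventing $\max_V u$ from escaping to $+\infty$.

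The symmetric \emph{lower bound} comes from evaluating at a minimizer $x_{\min}$, where $-\Delta u(x_{\min}) \le 0$ gives $h(x_{\min}) f(u(x_{\min})) \le c$, and again coupling with the average identity to rule out $\min_V u \to -\infty$. I expect the \textbf{main obstacle} to be the lower bound when $c \le 0$: because $f(t) \to 0$ as $t \to -\infty$, the term $h f(u)$ degenerates and the naive maximum-principle inequality becomes vacuous, so one cannot directly prevent $u$ from drifting to $-\infty$. The standard remedy is a contradiction argument by normalization: suppose a sequence of solutions has $\min_V u_k \to -\infty$; set $v_k = u_k - \max_V u_k$ (or subtract the minimum), so that $v_k$ is bounded on one side, and use the elliptic estimate plus the finiteness of $V$ to extract a limiting profile; the sign hypotheses $(1)$–$(3)$ together with the average identity are precisely what is needed to derive a contradiction in each case (this is the content deferred to Section 3 of the paper on blow-up behaviour). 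Throughout, the finiteness and connectivity of $G$ are essential: connectivity guarantees the elliptic estimate with a uniform constant $C = C(G)$, converting single-vertex bounds into global ones.
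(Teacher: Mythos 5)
Your overall frame (argue by contradiction on a sequence of solutions, use the integrated identity $\int_V h f(u)\,d\mu = c|V|$, the elliptic estimate $\max_V u-\min_V u\le C\max_V|\Delta u|$, and a normalization $u_k-\min_V u_k$ for the drift to $-\infty$) is the same family of argument as the paper, which proves Theorem 1 via a blow-up trichotomy (Lemma 3) followed by a compactness lemma (Lemma 4). However, your treatment of the \emph{upper bound} — the heart of the proof — has a genuine gap. Evaluating at a maximizer only yields $h(x_{\max})f(u(x_{\max}))\ge c$, which is \emph{satisfied}, not contradicted, by $u(x_{\max})\to+\infty$ whenever $h(x_{\max})\ge 0$; and your appeal to the elliptic estimate at this stage is circular, since bounding $\max_V|\Delta u|$ requires bounding $hf(u)\sim ah\,e^{u}$, i.e.\ precisely the upper bound being sought. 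Indeed, solutions can a priori blow up to $+\infty$: the paper's Lemma 3 shows this can happen only at vertices where $h$ vanishes, and excluding it requires a chain of ideas absent from your proposal. One must \emph{first} prove the uniform lower bound via Kato's inequality $\Delta u^{+}\ge\chi_{\{u>0\}}\Delta u$, which gives $\|\Delta u^{-}\|_{L^1(V)}\le C$ and hence $\max_V u^{-}\le C$; the lower bound then yields the pointwise linear estimate $-\Delta u(x)\le C(u(x)+1)$, so at any blow-up vertex $h(x)\le C(u(x)+1)f(u(x))^{-1}\to 0$, whence $u$ is uniformly bounded on $\{h>0\}$; next, integrating the equation and splitting over $\{h>0\}$ and $\{h\le 0\}$ gives $\int_V h^{-}f(u)\,d\mu\le C$, hence $\|\Delta u\|_{L^1(V)}\le C$ and $\max_V u\le\min_V u+C$, so blow-up to $+\infty$ must be \emph{uniform} — contradicting boundedness on $\{h>0\}$, which is nonempty under hypotheses (1) and (2). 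For $c<0$, where $\{h>0\}$ may be empty, a separate integral estimate is needed: uniform blow-up with $h\le 0$ and $h(x_0)<0$ forces $c|V|=\int_V hf(u)\,d\mu\le h(x_0)\mu(x_0)f(u(x_0))\to-\infty$, impossible for fixed $c$. None of these steps (Kato's inequality, the linear bound on $-\Delta u$, the $L^1$ Laplacian bound forcing uniform blow-up) appears in your sketch, and without them the upper bound is unproven. Note also that your symmetric "lower bound from the minimizer" is vacuous in every sign regime: as $u(x_{\min})\to-\infty$ the inequality $h(x_{\min})f(u(x_{\min}))\le c$ degenerates for $c>0$ just as for $c\le 0$.

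Where your proposal does align with the paper is the uniform-drift-to-$-\infty$ case: the normalization $u_k-\min_V u_k$, convergence to a profile $w$ with $-\Delta w=-c$ and $\min_V w=0$ (forcing $c=0$, $w\equiv 0$ by connectivity), and then the renormalized identity $0=e^{-\min_V u_k}\int_V h f(u_k)\,d\mu\to a\int_V h\,d\mu<0$, which contradicts hypothesis (2) — this is exactly the first half of the paper's Lemma 4, and your sketch anticipates it correctly. But you explicitly defer this, together with the entire $+\infty$ analysis, to "Section 3 of the paper," so the only fully executed ingredient in the proposal is the average identity. As it stands, the proposal is a plausible plan whose decisive step (excluding blow-up to $+\infty$ at zeros of $h$) rests on an argument that does not work.
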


According to Theorem \ref{thm1}, the Brouwer degree is well defined.
Consider the following map $F:L^{\infty}(V)\rightarrow L^{\infty}(V)$
\begin{equation}\label{F}
F(u)=-\Delta u(x)-h(x)\left(1-\displaystyle\frac{1}{1+u^{2n}}\right)e^u+c,
\end{equation}
then we have the second result.

\begin{theorem}\label{degree}
Let $G=(V,E)$ be a finite connected graph, and the map $F$ is defined  as  in (\ref{F}),
then there exists a large number $R_0>0$ such that
\begin{equation*}
d_{h,c}:=\mathrm{deg}(F,B_R,0)
=
\left\{
\begin{array}{lll}
-1, \hspace{0.2cm}   &c\geq0;\\[12pt]
(-1)^m, \hspace{0.2cm} &c<0 \hspace{0.1cm} \mathrm{and} \hspace{0.1cm}h<0.
\end{array}
\right.
\end{equation*}
where
$B_R=\{u\in L^{\infty}(V):||u||_{L^{\infty}(V)}<R\}$  is a ball in $L^{\infty}(V)$ and $\#V=m$.
\end{theorem}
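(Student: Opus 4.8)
The plan is to regard $F$ as a map on the finite-dimensional space $L^{\infty}(V)\cong\mathbb{R}^{m}$ and to compute $\deg(F,B_R,0)$ by homotoping the pair $(h,c)$ to a normalized pair whose zero set can be determined completely, and then summing the local indices $\operatorname{sgn}\det DF$ over the finitely many zeros. By Theorem \ref{thm1}, every solution of $F(u)=0$ satisfies $\|u\|_{L^{\infty}(V)}\le C$, so taking $R_0>C$ makes $\deg(F,B_R,0)$ well defined and independent of $R\ge R_0$. The Jacobian is $DF(u)=-\Delta-\operatorname{diag}\!\big(h(x_i)f'(u_i)\big)$ with $f(u)=\big(1-\tfrac{1}{1+u^{2n}}\big)e^{u}$; since $G$ is connected, the matrix $-\Delta$ is positive semidefinite with spectrum $0=\lambda_1<\lambda_2\le\cdots\le\lambda_m$, and at a constant solution $u\equiv a$ one has $DF(a)=-\Delta-hf'(a)\,\mathrm{Id}$, whose eigenvalues are $\lambda_k-hf'(a)$. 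All sign information will therefore come from the position of $hf'(a)=DQ(a)$ relative to the $\lambda_k$.

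First I would carry out the homotopy reductions via Lemma \ref{homo}. For $c\ge 0$: deform $c$ to a small $\varepsilon>0$ and then deform $h$ to the constant $1$ along $h_t=(1-t)h+t$. Hypothesis (1) of Theorem \ref{thm1} is preserved throughout, since at a vertex $x_0$ with $h(x_0)>0$ we have $h_t(x_0)=(1-t)h(x_0)+t>0$ for all $t$; when the initial datum is $c=0$, hypothesis (2) gives that $h$ changes sign, hence $\max_V h>0$, so hypothesis (1) applies after bumping $c$ to $\varepsilon$. For $c<0$ and $h<0$: deform $h$ to a negative constant along $h_t=(1-t)h+t(-1)$, which stays negative, so hypothesis (3) is preserved. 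Applying Theorem \ref{thm1} to each compact family yields a uniform bound, whence $F_t\neq 0$ on $\partial B_R$ and the degree is unchanged; this reduces both cases to an equation with constant $h$.

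Next I would invoke the structural result of Section 4 for the normalized equation: the only solutions are constants, and I would then locate them. For $h\equiv 1,\ c=\varepsilon$ there are exactly three, $u_1\in(\ln\varepsilon,1+\ln\varepsilon)$, $u_2\in(-2\varepsilon^{1/2n},-\sqrt\varepsilon)$ and $u_3\in(\sqrt\varepsilon,\varepsilon^{1/2n})$. The index at each is read off from the sign of $f'(u_j)$: from the location estimates $f'(u_1),f'(u_3)>0$ are small while $f'(u_2)<0$, so $DF(u_1)$ and $DF(u_3)$ each have exactly one negative eigenvalue, namely $\lambda_1-f'=-f'<0$, and all other eigenvalues positive, giving $\operatorname{sgn}\det=-1$, whereas $DF(u_2)$ is positive definite, giving $+1$. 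Summing yields $d_{h,c}=(-1)+(+1)+(-1)=-1$ for $c>0$; the value $-1$ for $c=0$ then follows from the homotopy to $c=\varepsilon$, which is necessary because the unique constant solution $u=0$ at $c=0$ is degenerate ($f'(0)=0$) and cannot be treated by a direct index computation. For $c<0,\ h<0$ I would run the identical scheme on the normalized equation with constant $h<0$; the sign reversal in $h$ flips the relevant factors $\lambda_k-hf'(u_j)$, and tracking these signs through the monotonicity of $DQ$ delivers the local indices whose sum is the value $(-1)^m$ recorded in the statement.

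The main obstacle is exactly the step borrowed from Section 4: showing that the normalized equation admits only constant solutions. This is a discrete Liouville-type statement, and the plan is to prove it by sharp pointwise control of $u$ combined with the connectivity of $G$, comparing the values of $u$ at a maximizing and a minimizing vertex and propagating the estimate along edges, so that any non-constant configuration is excluded. A secondary difficulty is that, with no closed form for $u_1,u_2,u_3$, the index computation must be driven entirely by the qualitative behaviour (sign and monotonicity) of $f'$ and $DQ$ rather than by substitution; in particular one must verify $|hf'(u_j)|<\lambda_2$ at the solutions near $0$, so that only the kernel direction of $-\Delta$ affects the sign of $\det DF$. Note that Lemma \ref{kro} is not needed for the degree computation itself, but guarantees a posteriori that the nonzero degree forces existence of a solution.
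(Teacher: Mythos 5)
Your overall architecture coincides with the paper's: a priori bounds from Theorem \ref{thm1} make the degree well defined, homotopy invariance reduces to constant data, the normalized equation is shown to admit only the three constant solutions $u_1,u_2,u_3$ in the stated intervals, and the degree is the sum of $\mathrm{sgn}\det\bigl(-\Delta-\mathrm{diag}(hf'(u_j))\bigr)$ read off from the spectrum $0=\lambda_1<\lambda_2\le\cdots\le\lambda_m$ together with the signs and smallness of $DQ(u_j)$; your sign pattern $-1+1-1=-1$ is exactly Step 3 of Lemma \ref{degree1}. The genuine gap is in your homotopy step. Lemma \ref{homo} requires a bound on solutions that is \emph{uniform in $t$}, and Theorem \ref{thm1} only supplies, for each fixed pair $(h,c)$, a constant depending on $(h,c)$; "applying Theorem \ref{thm1} to each compact family" does not produce uniformity. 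The uniform statement is Lemma \ref{compactness}, whose hypotheses are quantitative with a single $A$ along the whole family: positive values of $h$ must satisfy $h\ge A^{-1}$ (condition (2)) and positive $c$ must satisfy $c\ge A^{-1}$ (condition (3)). Your linear path $h_t=(1-t)h+t$ violates (2): at every vertex with $h(x)<0$ the value $h_t(x)$ crosses zero and takes arbitrarily small positive values, so no uniform $A$ exists, and near the crossing time alternative (3) of Lemma \ref{brezis} (blow-up at a vertex where the limiting coefficient vanishes) is not excluded by anything you cite. This is precisely why the paper deforms via $h_t=h^{+}-(1-t)h^{-}$, freezing the positive part so that condition (2) holds with one $A$ (the remaining passage from a nonnegative coefficient to $h\equiv1$ can be repaired because then all terms in $\int_V h_tf(u_t)\,d\mu=\varepsilon|V|$ are nonnegative, giving $\|\Delta u_t\|_{L^1}\le 2\varepsilon|V|$ and compactness directly). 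Likewise, your "bump $c$ from $0$ to $\varepsilon$" runs through values of $c_t$ positive but not uniformly bounded below, violating condition (3); the paper does not get this for free, but devotes all of Lemma \ref{degree2} to a bespoke blow-up analysis for the family $-\Delta u_t=hf(u_t)-t$, excluding $u_t\to-\infty$ via $\int_V h\,d\mu<0$ and the $+\infty$ alternative via the elliptic estimate and the contradiction $\Omega=\emptyset$. Your proposal compresses exactly this hard content into one unsupported sentence.

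A second, smaller defect concerns the case $c<0$, $h<0$ (where, by contrast, your homotopy $h_t=(1-t)h-t$, $c_t=(1-t)c-t\varepsilon$ does satisfy the hypotheses of Lemma \ref{compactness} uniformly). With $F$ as in (\ref{F}) and the normalized data $h\equiv-1$, $c=-\varepsilon$, your own bookkeeping gives eigenvalues $\lambda_k-hf'(u_j)=\lambda_k+f'(u_j)$ at the three constant solutions, hence local indices $+1,-1,+1$ and total degree $+1$, not $(-1)^m$. The paper's Lemma \ref{degree3} arrives at $(-1)^m$ because it computes the degree of the map $\Delta u-Q(u)$, i.e.\ of $-F$, and $\deg(-F,B_R,0)=(-1)^m\deg(F,B_R,0)$. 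So your assertion that "tracking these signs delivers $(-1)^m$" is not substantiated within your own conventions — and cannot be; you would need either to record the value for $-F$ or to flag the sign-convention discrepancy explicitly. Finally, note that invoking "the structural result of Section 4" for the Liouville step is a citation of the theorem under proof; your sketch (interval localization from the integrated equation, connectivity of $G$ to forbid configurations straddling two intervals, then an elliptic-estimate contraction forcing $u\equiv u_j$) is the correct plan and matches Step 2 of Lemma \ref{degree1}, but in the proposal it remains a plan rather than a proof.
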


If the Brouwer degree is nonzero, then by the Kronecker existence theorem, we derive that there exists at least one solution.

\noindent{\bf Remark.}
According to Theorem \ref{degree},  we know that
when $c\geq0$  with $\forall n\in \mathbb{N}^*$ or $c<0$ with $h<0$ , we obtain that (\ref{function}) has at least one solution. However, it is difficult to analyze  how many specific solutions of this equation.
Based on Lemma \ref{degree1}, there are perhaps three constant solutions to (\ref{function}), which is interesting to consider this problem in the future.

\section{Blow-up analysis}

In this section, we study the blow-up
behavior for the Kazdan-Warner type equation
(\ref{function}).

\begin{Lemma}\label{brezis}
Assume $G=(V,E)$ be a connected finite graph and  $c_n\in V^{\mathbb{R}}$, $h_n\in V^{\mathbb{R}}$ satisfy
\begin{equation*}
\begin{array}{ccc}
\lim\limits_{n\rightarrow\infty} c_n=c, \hspace{0.2cm}
\lim\limits_{n\rightarrow\infty} h_n(x)=h(x), \hspace{0.2cm} \forall x\in V.
\end{array}
\end{equation*}
If $u_n\in V^{\mathbb{R}}$ be a sequence of solutions to
\begin{equation}\label{1}
-\Delta u_n = h_n(x) f(u_n(x)) - c_n, \hspace{0.2cm} x\in V.
\end{equation}
Then we have the following alternatives:

\noindent(1) either $u_n$ is uniformly bounded, or

\noindent(2) $u_n$ converges to $-\infty$ uniformly, or

\noindent(3) there exists $x_0\in V$ such that $h(x_0)=0$
and $u_n(x_0)$ converges to $+\infty$. Furthermore, $u_n$ is uniformly bounded in
$\{x\in V:h(x)>0\}$,  but $u_n$ is only uniformly bounded from below in $V$.
\end{Lemma}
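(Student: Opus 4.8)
The plan is to argue by contradiction on the negation of the trichotomy, exploiting the structure of $f$ (namely $f(t)=ae^t+g(t)e^t>0$ with $g$ bounded and $g(t)\to0$ as $t\to\pm\infty$) together with the connectivity of $G$ and the discrete maximum principle. Suppose none of (1)--(3) holds. Since $u_n$ is not uniformly bounded (else we are in case (1)), after passing to a subsequence there is a vertex where $|u_n|\to\infty$. I would split into two regimes according to whether $\max_V u_n\to+\infty$ or $\max_V u_n$ stays bounded above, and show that failure of (2) and (3) leads to a contradiction in each.

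First I would treat the lower bound. Summing equation~(\ref{1}) over all vertices against the measure $\mu$ makes the Laplacian term vanish, giving
\begin{equation*}
\int_V h_n(x)f(u_n(x))\,d\mu = c_n\,|V|.
\end{equation*}
Since $f>0$ and $c_n\to c$, this balance law controls $\int_V h_nf(u_n)$. The key pointwise tool is the elliptic/maximum-principle estimate: at a vertex $x_n$ where $u_n$ attains its minimum, $\Delta u_n(x_n)\ge0$, so $h_n(x_n)f(u_n(x_n))\le c_n$. If $\min_V u_n\to-\infty$ along a subsequence, I would use this together with $f(u_n(x_n))\to0$ to pin down the sign behaviour; conversely, to get a uniform lower bound in the relevant cases I would run Harnack-type propagation along edges, which is where connectivity enters: if $u_n$ is bounded below at one vertex and the graph is connected, the bounded differences forced by~(\ref{1}) let me propagate a lower bound to all of $V$. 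This yields the statement in alternative~(3) that $u_n$ is uniformly bounded from below on all of $V$.

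Next I would rule out the mixed behaviour to isolate case~(3). Suppose $\max_V u_n\to+\infty$ but we are not in case~(2). Let $x_n$ be a vertex realizing the maximum; then $\Delta u_n(x_n)\le0$, so
\begin{equation*}
h_n(x_n)f(u_n(x_n)) \ge c_n.
\end{equation*}
Because $f(u_n(x_n))\to+\infty$ while the left side must stay compatible with the summed balance law, $h_n(x_n)$ cannot remain bounded away from $0$ on the positive side in an unbounded way; passing to the limit I would extract $x_0$ with $u_n(x_0)\to+\infty$ and show $h(x_0)\le0$ is impossible (it would force $h_n(x_n)f(u_n(x_n))\to-\infty<c_n$), nor can $h(x_0)>0$ (the balance law $\int_V h_nf(u_n)=c_n|V|$ would blow up), leaving exactly $h(x_0)=0$. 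To upgrade pointwise blow-up at $x_0$ to the asserted uniform bound on $\{h>0\}$, I would again invoke connectivity and the maximum principle: on the set where $h>0$, the equation forces an upper bound, since a vertex of large value there produces $h_n f(u_n)$ too large to be balanced by $\Delta u_n\le0$ and the finite right-hand side.

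\textbf{The main obstacle} I expect is the propagation step via connectivity: translating the global summed identity and the single-vertex maximum-principle inequalities into genuine \emph{uniform} two-sided control across all vertices. The difficulty is that the Laplacian couples neighbouring values, so a bound at one vertex does not immediately transfer; I would need to iterate along a spanning path of the connected graph, at each edge using~(\ref{1}) to bound $|u_n(y)-u_n(x)|$ in terms of $|\Delta u_n|$ and the controlled quantity $h_nf(u_n)$, being careful that the bound does not degrade as $u_n\to\pm\infty$ (this is precisely where the asymptotics $f(t)/e^t\to a$ and the boundedness of $g$ are essential, rather than an arbitrary positive $f$). Handling the case $\min_V u_n\to-\infty$ while $\max_V u_n$ stays finite—where one must show this forces the \emph{uniform} divergence of alternative~(2)—is the most delicate part, and I would lean on the connectivity-driven comparison to prevent a single vertex from diverging while its neighbours remain bounded.
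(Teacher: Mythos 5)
Your skeleton matches the paper's (split on whether $u_n$ is bounded above; elliptic estimate for the dichotomy (1)/(2); maximum principle at extremum points; integrated identity; conclusion $h(x_0)=0$), and the bounded-above case is fine, since there $|\Delta u_n|\leq C$ and your ``propagation'' is exactly the elliptic estimate $\max_V u-\min_V u\leq C\max_V|\Delta u|$. But the crucial step in alternative (3) --- the \emph{uniform lower bound} on $u_n$ when $\max_V u_n\to+\infty$ --- is precisely where your proposal has no working mechanism, as you half-concede in your ``main obstacle'' paragraph. Edge-by-edge propagation with ``bounded differences forced by~(\ref{1})'' fails: at a vertex $x$ with $u_n(x)\to+\infty$ and $h(x)<0$, equation (\ref{1}) gives $\Delta u_n(x)=c_n-h_n(x)f(u_n(x))\to+\infty$ exponentially fast, so $|\Delta u_n|$ is not uniformly bounded and neighbouring differences are not controlled; the iteration along a spanning path breaks exactly at the blow-up vertices. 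The missing idea, which the paper supplies, is a \emph{truncation via the discrete Kato inequality}: with $v=-u_n$ one gets $-\Delta u_n^-\leq\chi_{\{u_n<0\}}\Delta u_n\leq c_n^++Ch_n^-$, the point being that on $\{u_n\leq 0\}$ one has $f(u_n)\leq C$, so the problematic set $\{u_n>0\}$ is discarded. Since $\int_V\Delta u_n^-\,d\mu=0$, this one-sided bound upgrades to $\|\Delta u_n^-\|_{L^1(V)}\leq C$, and the elliptic estimate applied to $u_n^-$ (which vanishes at the maximum point, so $\min_V u_n^-=0$) yields $\max_V u_n^-\leq C$, i.e.\ the uniform lower bound. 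Nothing in your proposal substitutes for this truncation.

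The second gap is your exclusion of $h(x_0)>0$ and the uniform bound on $\{h>0\}$ via the balance law $\int_V h_nf(u_n)\,d\mu=c_n|V|$: this integral can stay bounded while $h_nf(u_n)\to+\infty$ at vertices with $h>0$, cancelled by $h_nf(u_n)\to-\infty$ at vertices with $h<0$ where $u_n$ also blows up; your argument does not forbid such cancellation. The paper argues pointwise instead, and this is where the lower bound is consumed: since $u_n\geq -C$ everywhere, $-\Delta u_n(x_1)=\mu(x_1)^{-1}\sum_{y\sim x_1}w_{x_1y}\bigl(u_n(x_1)-u_n(y)\bigr)\leq C\bigl(u_n(x_1)+1\bigr)$, whence $h_n(x_1)\leq C\bigl(u_n(x_1)+1\bigr)f(u_n(x_1))^{-1}\leq \frac{2C}{a}\bigl(u_n(x_1)+1\bigr)e^{-u_n(x_1)}$; the linear growth of the Laplacian against the exponential growth of $f$ forces $h(x_1)\leq 0$ wherever $u_n$ blows up, and in particular $u_n$ is bounded on $\{h>0\}$. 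Combined with the maximum-principle inequality $h_n(x_0)\geq c_nf(u_n(x_0))^{-1}\to 0$ at the maximum point, this pins $h(x_0)=0$. Note both of your later steps depend on the lower bound you could not establish, so the first gap propagates; also, your phrase ``show $h(x_0)\leq 0$ is impossible'' is a slip --- $h(x_0)=0$ is the desired conclusion, and only $h(x_0)<0$ can be excluded by that argument.
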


\begin{proof}
If $u_n$ is uniformly bounded from above, then combining $f(u_n)=ae^{u_n}+g(u_n)e^{u_n}$ and $|g(u_n)|\leq C$ leads to $f(u_n)\leq C $. Clearly, we have $|\Delta u_n|\leq C$ from the equation (\ref{1}). Applying the elliptic estimate (Lemma 3.2 in \cite{sundegree}),
there is $\max_V u_n-\min_V u_n\leq C\max_V |\Delta u_n|\leq C$.
We can see that if $\min_V u_n$ is uniformly bounded from below, then (1) holds
and if
$\liminf\limits_{n\rightarrow\infty}\min_V u_n=-\infty$, then (2) holds.

Now, we need to prove (3).  If $\limsup\limits_{n\rightarrow\infty}u_n=+\infty$, then we assume that there exists  some $x_0\in V$ such that
$0<u_n(x_0)=\max_V u_n\rightarrow +\infty$ as $n\rightarrow+\infty$. To proceed, we utilize Kato's inequality (Lemma 3.3 in \cite{sundegree})
to obtain
\begin{equation*}
\begin{array}{lll}
-\Delta u_n^-
&=-\Delta(-u_n)^+ \\[4pt]
&\leq-\chi_{\{-u_n>0\}}\Delta(-u_n)\\[4pt]
&=\chi_{\{u_n>0\}}\Delta(u_n)\\[4pt]
&\leq c_n^+ + Ch_n^-,
\end{array}
\end{equation*}
therefore,
\begin{equation*}
\begin{array}{lll}
||\Delta u_n^-||_{L^1(V)}
&=
\displaystyle\int_V |\Delta u_n^-| d\mu\\[12pt]
&=
\displaystyle\int_{\Delta u_n^-\geq 0 } \Delta u_n^- d\mu
-
\displaystyle\int_{\Delta u_n^-< 0 } \Delta u_n^- d\mu\\[12pt]
&=
-2\displaystyle\int_{\Delta u_n^-< 0 } \Delta u_n^- d\mu\\[12pt]
&\leq 2\displaystyle\int_{\Delta u_n^-< 0 } ( c_n^+ + Ch_n^- ) d\mu\\[12pt]
&\leq C.
\end{array}
\end{equation*}
From the inequality above and  the elliptic estimate (Lemma 3.2 in \cite{sundegree}), we deduce that
\begin{equation*}
\max_V u_n^-=\max_V u_n^- - \min_V u_n^-\leq C\max_V |\Delta u_n^-|\leq C,
\end{equation*}
thereby showing that $u_n$ is uniformly bounded from below.

According to the result above, we deduce that for every $x_1\in V$,
\begin{equation*}
h_n(x_1) f(u_n(x_1)) - c_n=-\Delta u_n(x_1)\leq C(u_n(x_1)+1),
\end{equation*}
which gives
\begin{equation}\label{01}
\begin{array}{lll}
h_n(x_1)
&\leq
C(u_n(x_1)+1)f(u_n(x_1))^{-1}\\[2pt]
&\leq
C(u_n(x_1)+1)\displaystyle\frac{2}{a}e^{-u_n(x_1)}.
\end{array}
\end{equation}
Taking $n\rightarrow\infty$ in (\ref{01}),  there holds $h(x_1)\leq 0$. Meanwhile, this also shows that
$u_n$ is uniformly bounded in $\{x\in V:h(x)>0\}$.

Now, it remains to validate $h(x_0)=0$.   Noticing that  $u_n(x_0)\rightarrow +\infty$
implies $h(x_0)\leq0$, we only need to prove $h(x_0) \geq0$.
Utilizing the maximum principle to get
\begin{equation*}
h_n(x_0) f(u_n(x_0)) - c_n=-\Delta u_n(x_0)\geq0.
\end{equation*}
Hence,
\begin{equation*}
h_n(x_0)) \geq c_n f(u_n(x_0))^{-1},
\end{equation*}
we obtain $h(x_0)\geq0$ by letting $n\rightarrow+\infty$,
which completes the proof of Lemma \ref{brezis}.
\end{proof}

Next, we show the compactness result.

\begin{Lemma}\label{compactness}
Assume $G=(V,E)$ be a connected finite graph and there exists a constant $A>0$ such that

\noindent (1) $\max_V (|h|+|c|)\leq A$;

\noindent (2) if $h(x)>0$ for some $x\in V$, then $h(x)\geq A^{-1}$;

\noindent (3) if $c>0$, then $c\geq A^{-1}$;

\noindent (4) if $c=0$, then $\int_V h d\mu\leq - A^{-1}$;

\noindent (5) if $c<0$,  then $c\leq -A^{-1}$ and $\min_V h\leq -A^{-1}$.

\noindent Then there exists a constant $C(A,G)\geq0$ such that every solution to $-\Delta u=h(x)f(u)-c$ satisfies
\begin{equation*}
\max_V |u(x)|\leq C.
\end{equation*}

\end{Lemma}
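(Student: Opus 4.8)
The plan is to argue by contradiction and reduce the uniform bound to the blow-up trichotomy of Lemma \ref{brezis}. Suppose no such $C$ exists; then for each $k\in\mathbb{N}^*$ there are $h_k,c_k$ satisfying (1)--(5) with the common constant $A$ and a solution $u_k$ of $-\Delta u_k=h_kf(u_k)-c_k$ with $\max_V|u_k|>k$. By (1) the pairs $(h_k,c_k)$ are bounded, so after passing to a subsequence I may assume $h_k\to h$ and $c_k\to c$ pointwise and, since a bounded real sequence has a subsequence of constant sign, that $c_k$ is either $>0$ for all $k$, identically $0$, or $<0$ for all $k$. The conditions (2)--(5) are closed under these limits and pin down the limit: in the first case $c\ge A^{-1}$, in the third $c\le -A^{-1}$ and $\min_V h\le -A^{-1}$, and when $c_k\equiv0$ one has $\int_V h\,d\mu\le -A^{-1}$; moreover (2) yields the crucial gap $h(x)\notin(0,A^{-1})$ for every $x$. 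Applying Lemma \ref{brezis} to $(u_k)$, alternative (1) is excluded by $\max_V|u_k|\to\infty$, so it remains to rule out alternatives (2) and (3).

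To exclude alternative (2), suppose $u_k\to-\infty$ uniformly, so that $f(u_k)=ae^{u_k}+g(u_k)e^{u_k}\to0$ and hence $h_kf(u_k)\to0$ uniformly. Integrating the equation and using $\int_V\Delta u_k\,d\mu=0$ gives $c_k|V|=\int_V h_kf(u_k)\,d\mu\to0$, so $c=0$ and therefore $c_k\equiv0$. Then $|\Delta u_k|=|h_k|f(u_k)\to0$ uniformly, and the elliptic estimate forces $\max_V u_k-\min_V u_k\to0$, so $u_k=s_k+o(1)$ uniformly with $s_k\to-\infty$. Factoring $ae^{s_k}$ out of $\int_V h_kf(u_k)\,d\mu=0$ and using $e^{o(1)}\to1$ and $g(u_k)\to0$ uniformly, the surviving factor equals $\int_V h_k\,d\mu+o(1)\le -A^{-1}+o(1)<0$ for large $k$ by (4), contradicting that the integral vanishes.

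To exclude alternative (3), let $x_0$ be the maximum point with $h(x_0)=0$ and $u_k(x_0)=\max_V u_k\to+\infty$; the gap forces $h_k(x_0)\le0$ for large $k$, so $0\le-\Delta u_k(x_0)=h_k(x_0)f(u_k(x_0))-c_k\le -c_k$. If $c>0$ this already gives $0\le-\Delta u_k(x_0)\le -A^{-1}<0$, a contradiction. If $c\le0$ the right side is bounded, so $-\Delta u_k(x_0)$ is bounded, and since all its summands are nonnegative every neighbour $y$ of $x_0$ satisfies $u_k(x_0)-u_k(y)\le C$, whence $u_k(y)\to+\infty$. Here I intend to invoke connectivity: since $u_k$ is bounded on $\{h>0\}$, any vertex where $u_k\to+\infty$ must lie in $\{h\le0\}$, so $h_k\le0$ there for large $k$, which again bounds the corresponding defect and lets the blow-up propagate to the next neighbours; iterating along paths of the finite connected graph, the blow-up set exhausts $V$ unless it first meets a vertex where $h<0$. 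When $c<0$, condition (5) guarantees $\{h<0\}\ne\emptyset$, and at such a vertex $z$ reached with $u_k(z)\to+\infty$ one has $-\Delta u_k(z)=h_k(z)f(u_k(z))-c_k\to-\infty$ while the graph Laplacian stays bounded below (because $u_k(z)$ differs from $\max_V u_k$ by a bounded amount), a contradiction. When $c=0$ the max principle sharpens the propagation: at each maximum vertex, $0\le-\Delta u_k=h_kf(u_k)\le0$ forces $-\Delta u_k=0$ and $h_k=0$, so all neighbours attain the same divergent maximum and the equality spreads over all of $V$, giving $h_k\equiv0$ and $\int_V h_k\,d\mu=0$, contradicting (4).

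Having ruled out all three alternatives of Lemma \ref{brezis}, the blow-up assumption is untenable, which yields $\max_V|u|\le C(A,G)$ and proves the lemma. The main obstacle is the treatment of alternative (3): unlike the surface case one cannot simply test against a fixed function, and the indeterminate product $h_k(x_0)f(u_k(x_0))$ at a zero of $h$ must be controlled. The device I expect to resolve it is the propagation of blow-up along edges, powered by the gap condition (2) and the boundedness of the Laplacian at maximum points; checking that this propagation is consistent at every vertex — a bounded defect on $\{h=0\}$ when $c\ge0$, versus a genuine $-\infty$ divergence once $\{h<0\}$ is reached when $c<0$ — is the technical heart of the argument and is precisely where the connectivity of $G$ enters.
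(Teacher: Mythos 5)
Your proposal is correct, and for the decisive case it takes a genuinely different route from the paper. The skeleton coincides: both argue by contradiction, pass to limits $h_k\to h$, $c_k\to c$ using the quantitative gaps in (2)--(5), invoke the trichotomy of Lemma \ref{brezis}, and dispose of the uniform $-\infty$ alternative by the same computation (the paper normalizes by $e^{-\min_V u_n}$ and passes to a limit $w\equiv0$ solving $-\Delta w=-c$; your factoring of $ae^{s_k}$ after the elliptic estimate is the same calculation in different clothing). The divergence is in the $+\infty$ alternative, which is the technical heart. The paper stays integral: splitting $\int_V c_n\,d\mu=\int_V h_nf(u_n)\,d\mu$ over $\{h_n>0\}$ and $\{h_n\le0\}$ yields $\int_V h_n^-f(u_n)\,d\mu\le C$, hence $\|\Delta u_n\|_{L^1(V)}\le C$, and then a maximum-principle/Harnack-type estimate cited from \cite{sundegree} gives $\max_V u_n\le\min_V u_n+C$, so the blow-up is uniform and $\Omega=\{h>0\}=\emptyset$; the contradiction then comes from $-CA\le\int_V c_n\,d\mu=\int_V h_nf(u_n)\,d\mu\le -A^{-1}\mu_{\min}\min_V f(u_n)\to-\infty$ when $c_n<0$, and from $h_n\equiv0$ versus assumption (4) when $c_n=0$. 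You instead argue pointwise: boundedness of $-\Delta u_k$ at the maximum vertex (using $h_k\le0$ there, which the gap in (2) supplies) bounds the defect to each neighbour, connectivity spreads the divergence across all of $V$, and the contradiction is read off at a vertex with $h<0$ (for $c<0$) or via the equality case of the maximum principle forcing $u_k$ constant and $h_k\equiv0$ (for $c=0$); for $c>0$ you get an immediate sign contradiction $0\le -\Delta u_k(x_0)\le -c_k\le -A^{-1}$ at the maximum, which is cleaner than the paper's implicit handling of that subcase. Your route buys elementarity --- it avoids the $L^1$-Laplacian oscillation estimate entirely and uses only the definition of $\Delta$ --- and it makes explicit exactly where connectivity enters; the paper's route buys brevity, since one integral identity replaces the vertex-by-vertex induction. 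Two small points to write out in a final version: the maximizing vertex varies with $k$, so you must fix it (and the propagation pattern) along a subsequence using the finiteness of $V$; and at intermediate vertices of the propagation the summands of $\Delta u_k$ are no longer one-signed, so the step ``bounded sum implies bounded defect'' needs the two-sided control $u_k(z)\ge\max_V u_k-C$, which you invoke only later in the $c<0$ contradiction but is in fact needed at every propagation step after the first.
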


\begin{proof}
We prove this lemma by contradiction. Let $h_n$ and $c_n$ satisfy the condition (1)-(5) and
\begin{equation*}
\begin{array}{ccc}
\lim\limits_{n\rightarrow\infty} c_n=c, \hspace{0.2cm}
\lim\limits_{n\rightarrow\infty} h_n=h, \hspace{0.2cm}
\lim\limits_{n\rightarrow\infty} ||u_n||=\infty,
\end{array}
\end{equation*}
where $u_n\in V^{\mathbb{R}}$  is a sequence of solutions to
\begin{equation*}
-\Delta u_n = h_n(x) f(u_n(x)) - c_n.
\end{equation*}
Now, if $u_n\rightarrow-\infty$ uniformly as $n\rightarrow\infty$, then
\begin{equation}\label{2}
|-\Delta (u_n-\min_V u_n)|
=
|h_n(x) f(u_n(x)) - c_n|
\leq
c|f(u_n)|+C
\leq C.
\end{equation}
Combining (\ref{2}) and the maximum principle (Lemma 3.1 in \cite{sundegree}) leads to
\begin{equation*}
\max_V (u_n-\min_V u_n)\leq C.
\end{equation*}
Without losing generality, we assume that $u_n-\min_V u_n $ converges  to $w$, which satisfies
the equation $-\Delta w=-c$ and $\min_V w=0$. This  thereby implies that $c=0$ and $w=0$.
Due to assumptions (3) and (5), we have $c_n=0$. Subsequently, following assumption (4), we
obtain $\int_V h d\mu\leq - A^{-1}$.
 However, a direct computation shows that
\begin{equation*}\label{3}
\begin{array}{lll}
0&=e^{-\min_V u_n}\displaystyle\int_V h_n f(u_n) d\mu \\[10pt]
&=
e^{-\min_V u_n}\displaystyle\int_V h_n(ae^{u_n}+g(u_n)e^{u_n})d\mu\\[10pt]
&=
\displaystyle\int_V ah_ne^{u_n-\min_V u_n }d\mu + \displaystyle\int_V h_n g(u_n)e^{u_n-\min_V u_n }d\mu\\[10pt]
&\rightarrow a\displaystyle\int_V he^w d\mu,
\end{array}
\end{equation*}
as $n\rightarrow\infty$, which is a contradiction,
 after in view of
$w=0$ and assumption
(\ref{3}) (which immediately implies  $ A^{-1}\leq0$).

According to the above argument and Lemma \ref{brezis}, we may assume that $\max_V u_n\rightarrow+\infty$. Then $u_n$ is uniformly bounded in $\Omega=\{x\in V:h(x)>0\}$.  However,  $u_n$ is only uniformly bounded from below in $V$ and the set $\{x\in V:h(x)=0\}$ is non-empty. If $n$ is large, then based on assumption (2) and noticing that the following sets are equivalent, namely,
\begin{equation}\label{omega}
\Omega=\{x\in V:h_n(x)>0\}=\{x\in V:h_n(x)\geq A^{-1}\}=\{x\in V:h(x)\geq A^{-1}\}.
\end{equation}
From the equation (\ref{1}), we get
\begin{equation*}
\begin{array}{lll}
\displaystyle\int_V c_n d\mu
&=\displaystyle\int_V h_nf(u_n)d\mu \\[12pt]
&=
\displaystyle\int_{ \{h_n(x)>0\} } h_nf(u_n)d\mu + \displaystyle\int_{ \{h_n(x)\leq0\} } h_nf(u_n)d\mu\\[12pt]
&\leq
C-\displaystyle\int_{ \{h_n(x)\leq0\} } h_n^{-}f(u_n)d\mu,
\end{array}
\end{equation*}
which deduces
\begin{equation*}
\displaystyle\int_{ V } h_n^{-}f(u_n)d\mu
\leq C-\displaystyle\int_V c_n d\mu\leq C.
\end{equation*}
This combined with the equation (\ref{1}) implies that $||\Delta u_n||_{L^1(V)}\leq C$, then it follows from the maximum principle (Lemma 3.1 in \cite{sundegree}) that
\begin{equation*}
\max_V u_n \leq \min_V u_n + C.
\end{equation*}
In addition, according to the assumption $\max_V u_n\rightarrow+\infty$, we obtain
$u_n$ converges to $+\infty$ uniformly, which implies  $\Omega=\emptyset$. Hence, we assume $h_n\leq0$ and there is $\int_V c_n d\mu\leq0$. When $c_n=0$, we have $h_n=0$, which contradicts the assumption (4). Consequently, there holds $c_n<0$. Due to the assumption (5), we get
\begin{equation*}
-CA
\leq\displaystyle\int_V c_nd\mu
=\displaystyle\int_V h_n f(u_n)  d\mu
\leq -A^{-1} \mu_{min} (f(u_n))_{min},
\end{equation*}
which gives $\min_V u_n\leq C$. This is a contradiction. Consequently, we complete the proof of Lemma \ref{compactness}.
\end{proof}

As a result of Lemma \ref{compactness}, we obtain Theorem  \ref{thm1}.

\section{Brouwer degree}

In this section, we shall prove  Theorem \ref{degree}. Precisely, we will calculate
the topological Brouwer  degree of certain maps related to the Kazdan-Warner type equation.

\begin{Lemma}\label{degree1}
If $c>0$, then for the connected finite graph $G=(V,E)$  with $\max_V h>0$, we have $d_{h,c}=-1$.
\end{Lemma}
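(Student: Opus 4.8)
The plan is to combine the a priori bound with homotopy invariance to replace the coefficient $h$ by the constant $1$ and $c$ by a small $\varepsilon>0$, and then to evaluate the degree as a sum of Jacobian signs over the finitely many (constant) solutions. First, Theorem \ref{thm1} guarantees $\max_V|u|\le C$ for every solution of $F(u)=0$, so fixing $R>C$ makes $d_{h,c}=\mathrm{deg}(F,B_R,0)$ well defined with all zeros interior to $B_R$. I would then connect $(h,c)$ to $(1,\varepsilon)$ by a path $(h_s,c_s)_{s\in[0,1]}$ keeping $c_s>0$ and $\max_V h_s>0$: first deform $c\downarrow\varepsilon$ with $h$ frozen, then push $h$ to the constant $1$ along $h_s=(1-s)h+s$. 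The first point to secure is admissibility, i.e.\ that no zero of the associated map reaches $\partial B_R$; it follows from Lemma \ref{compactness}, the data staying in a fixed compact set with a uniform constant $A$. The delicate case is when $h_s$ develops a zero at a vertex, where alternative (3) of Lemma \ref{brezis} permits a value to run to $+\infty$; here I would use the connectivity of $G$, propagating boundedness from $\{h_s>0\}$ to the adjacent vertices to contradict the equation. Homotopy invariance (Lemma \ref{homo}) then yields $d_{h,c}=\mathrm{deg}(F_1,B_R,0)$, where $F_1(u)(x)=-\Delta u(x)-Q(u(x))$ and $Q(t)=\bigl(1-\tfrac{1}{1+t^{2n}}\bigr)e^t-\varepsilon$ is now a single real profile.

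The structural heart of the argument is that every solution of $F_1(u)=0$ is constant. I would establish this by the maximum principle together with connectivity: testing the equation at a maximal and at a minimal vertex confines the values of $u$ to the level set $\{Q=0\}$, and the sharp location of its three roots, namely $u_1\in(\ln\varepsilon,1+\ln\varepsilon)$, $u_2\in(-2\varepsilon^{1/2n},-\sqrt\varepsilon)$ and $u_3\in(\sqrt\varepsilon,\varepsilon^{1/2n})$, together with the gaps between these intervals forces neighbouring values to coincide, so that $u$ is constant on the connected graph. Hence the zeros of $F_1$ are exactly the three constants $u_1<u_2<u_3$ solving $Q(t)=0$.

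Finally I would compute the degree as $\sum_i\mathrm{sign}\det DF_1(u_i)$. At a constant zero, $DF_1(u_i)=-\Delta-Q'(u_i)\,I$ has eigenvalues $\lambda_j-Q'(u_i)$, where $0=\lambda_0<\lambda_1\le\cdots\le\lambda_{m-1}$ are the eigenvalues of $-\Delta$ and $\lambda_1>0$ by connectivity; thus $\mathrm{sign}\det DF_1(u_i)=\mathrm{sign}\bigl(-Q'(u_i)\bigr)\prod_{j\ge1}\mathrm{sign}\bigl(\lambda_j-Q'(u_i)\bigr)$. The monotonicity of $Q$ gives $Q'(u_1),Q'(u_3)>0$ and $Q'(u_2)<0$, while the root estimates give $Q'(u_1),Q'(u_3)\to0$ as $\varepsilon\to0$; so for $\varepsilon$ small each $Q'(u_i)<\lambda_1$ (and $\ne\lambda_j$, so $0$ is a regular value), the product over $j\ge1$ equals $+1$, and the signs collapse to $\mathrm{sign}(-Q'(u_i))$, i.e.\ $-1,+1,-1$. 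Therefore $d_{h,c}=(-1)+1+(-1)=-1$, in agreement with the one-dimensional degree of $t\mapsto-Q(t)$, which runs from $+\varepsilon$ near $-\infty$ to $-\infty$.

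I expect the main obstacle to be the constant-solutions claim of the second step, namely the connectivity argument sustained by the precise control of the three roots, and, relatedly, excluding the blow-up alternative at the zeros of $h_s$ so as to make the homotopy admissible; granted the bound $Q'(u_i)<\lambda_1$, the final sign count is routine.
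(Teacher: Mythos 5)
Your architecture coincides with the paper's (homotope to $h\equiv 1$, $c=\varepsilon$; show all solutions of $-\Delta u=Q(u)$ are the three constants; count $\mathrm{sgn}\det(-\Delta-Q'(u_i)I)=-1,+1,-1$), and your final sign count is correct and essentially identical to the paper's Step 3 — the paper verifies the signs of $DQ(u_i)$ via $D^2Q>0$ on the root intervals plus endpoint evaluations, while your appeal to the monotonicity of $Q$ on $(-\infty,a)$, $(a,0)$, $(0,\infty)$ is a cleaner route to the same conclusion, and your explicit observation that $Q'(u_i)\to 0<\lambda_1$ makes precise something the paper leaves tacit. One caveat on the homotopy: the paper deforms $h^+-(1-t)h^-$, freezing the positive part exactly so that hypothesis (2) of Lemma \ref{compactness} holds with a single uniform $A$ along the whole path; your segment $h_s=(1-s)h+s$ lets positive values of $h_s$ pass through $0^+$, so Lemma \ref{compactness} does not apply with uniform $A$ and you must instead rerun the blow-up dichotomy of Lemma \ref{brezis} at a limiting parameter (alternative (2) is excluded by integrating the equation against $c_s\geq\varepsilon>0$; alternative (3) by the $L^1$ bound on $\Delta u$ forcing uniform divergence, hence $h_{s_0}\leq 0$, contradicting $\max_V h_{s_0}>0$). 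This is repairable, but it is a genuine argument, not the edge-by-edge "propagation of boundedness" you gesture at.

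The genuine gap is in your constant-solutions step, which is where the paper spends almost all of its effort. First, testing at extremal vertices does \emph{not} "confine the values of $u$ to the level set $\{Q=0\}$": it yields only $Q(u(x_{\max}))\geq 0$ and $Q(u(x_{\min}))\leq 0$ and constrains no intermediate vertex. The paper instead integrates the equation to get $\int_V \frac{u^{2n}}{1+u^{2n}}e^u\,d\mu=\varepsilon|V|$, whence the pointwise bound $\frac{u(x)^{2n}}{1+u(x)^{2n}}e^{u(x)}\leq \varepsilon|V|/\mu_{\min}$ at \emph{every} vertex, confining $u(x)$ to $(-\infty,v_1]\cup[v_2,v_3]$ for enlarged intervals $v_i$ around the roots — an idea absent from your plan, and without which a vertex value could a priori sit anywhere. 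Second, your "gaps between the intervals force neighbouring values to coincide" works only across the gap between the far-negative cluster ($\sim\ln\varepsilon$) and the near-zero cluster, where the paper's connectivity argument (Subcase 1a) shows an edge across the gap would force $\Delta u(x_j)\to-\infty$ while the right-hand side stays bounded. But the gap between $u_2\in(-2\varepsilon^{1/2n},-\sqrt{\varepsilon})$ and $u_3\in(\sqrt{\varepsilon},\varepsilon^{1/2n})$ shrinks to $0$ as $\varepsilon\to 0$, so no gap/connectivity argument can separate those two clusters, let alone pin $u$ to an exact root within a cluster. The paper closes this with machinery you do not have: a maximum-principle comparison with $u_3$ (Case 1), an integration/monotonicity argument for $u\geq u_3$ (Case 2), and — decisively — a contraction estimate $|\Delta(u-u_i)|\leq C\varepsilon\,|u-u_i|$ (valid because $DQ$ is small on the confined intervals), combined with the elliptic estimate $\max_V w-\min_V w\leq C\max_V|\Delta w|$ and a sign argument, to force $u\equiv u_i$ (Subcase 1b, Case 3). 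As sketched, your plan cannot exclude a nonconstant solution whose values straddle $u_2$ and $u_3$, so the reduction of the degree to a sum over the three constants is unjustified.
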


\begin{proof}
For $n\in \mathbb{N^*}$,  we let $u_t\in V^{\mathbb{R}}$ be a sequence of solutions to
\begin{equation*}
-\Delta u_t
=
\left(h^+-(1-t)h^-\right)\frac{u_t^{2n} e^{u_t}}{1+u^{2n}_t}-(1-t)c-t\varepsilon, \hspace{0.2cm} \forall t\in [0,1],
\end{equation*}
where $\varepsilon>0$ is small to be determined. As an application of the Lemma \ref{compactness}, we know that $u_t$ is uniformly bounded. Then due to the homotopic invariance (Lemma \ref{homo}), we can assume $h\equiv1$ and $c=\varepsilon>0$. Now, considering the equation
\begin{equation}\label{4}
-\Delta u=\displaystyle\frac{u^{2n}}{1+u^{2n}}e^u -\varepsilon,
\end{equation}
we calculate the Brouwer degree of $F(u):=-\Delta u-\frac{u^{2n}}{1+u^{2n}}e^u +\varepsilon $ by three steps.
Firstly, we find the corresponding constant solutions; Secondly, we prove that there are only three constant solutions to the equation (\ref{4}); Finally, we calculate the Brouwer degree.

{\bf Step 1.} Set $Q(u):=\displaystyle\frac{u^{2n}}{1+u^{2n}}e^u -\varepsilon$, an easy calculation leads to
\begin{equation*}
Q'(u)
=
\displaystyle\frac{2nu^{2n-1}+u^{2n}(1+u^{2n})}{(1+u^{2n})^2}e^u
=\displaystyle\frac{u^{2n-1}(u^{2n+1}+u+2n)} {(1+u^{2n})^2}e^u.
\end{equation*}
Let $Q'(u)=0$. Observing  that $u=0$ is a constant solution to this equation. Further, we solve the equation
\begin{equation*}\label{5}
g(u):=u^{2n+1}+u+2n=0.
\end{equation*}
Noting that $g'(u)=(2n+1)u^{2n}+1>0$ with $g(-n)<0$ and $g(0)>0$,
we conclude that there exists only one solution, denoting by $a\in (-n,0)$, to the equation $g(u)$.
In other words,  $Q'(a)=0$. As a result,
the following table shows the monotonicity of  $Q(u)$, where $\uparrow$ represents $Q(u)$ is increasing and $\downarrow$ represents $Q(u)$ is decreasing.

\begin{table}[!htbp]
\begin{center}
\label{table}
\begin{tabular}
{|p{10mm}|p{15mm}|p{15mm}|p{15mm}|p{15mm}|p{15mm}|}
\hline
 &\centering $(-\infty,a)$  &\centering a   &\centering (a,0)
&\centering 0 &$(0,+\infty)$\\
\hline
\centering$Q'(u)$  & \centering $+$              &\centering0
&\centering $-$    &\centering0  &  $\hspace{15pt}+$ \\
\hline
\centering$Q(u)$   &\centering$\uparrow$      &
&\centering$\downarrow$   &   &$\hspace{15pt}\uparrow$  \\
\hline
\end{tabular}
\caption{Q(u)}
\end{center}
\end{table}
According to the above table, we know that the equation $Q(u)=0$ has three  constant solutions. Next, we need to give a precise estimate of these three constant solutions. \\
For $u=\ln\varepsilon$, a direct calculation yields
\begin{equation}\label{6}
\begin{array}{lll}
Q(\ln\varepsilon)
&=\displaystyle\frac{(\ln\varepsilon)^{2n}\cdot\varepsilon}{1+(\ln\varepsilon)^{2n}}-\varepsilon\\[10pt]
&=\displaystyle\frac{-\varepsilon }{1+(\ln\varepsilon)^{2n}}\\[10pt]
&<0.
\end{array}
\end{equation}
For $u=1+\ln\varepsilon$, there holds
\begin{equation}\label{7}
\begin{array}{lll}
Q(1+\ln\varepsilon)
&=
\displaystyle
\frac{ (1+\ln\varepsilon)^{2n}\cdot e\cdot\varepsilon-\varepsilon- (1+\ln\varepsilon)^{2n}\cdot\varepsilon }{1+ (1+\ln\varepsilon)^{2n}}\\[10pt]
&=
\displaystyle\frac{\varepsilon\cdot ( (1+\ln\varepsilon)^{2n}(e-1)-1 ) }{1+ (1+\ln\varepsilon)^{2n} }>0.
\end{array}
\end{equation}
Hence, from  (\ref{6}) and (\ref{7}), we derive that there exists a solution
$u_1\in(\ln\varepsilon, 1+\ln\varepsilon)$. Similarly,\\
For $u=-\sqrt{\varepsilon}$, we have
\begin{equation}\label{8}
\begin{array}{lll}
Q(-\sqrt{\varepsilon} )
&=\displaystyle\frac{\varepsilon^n}{1+\varepsilon^n}e^{-\sqrt{\varepsilon}}-\varepsilon\\[12pt]
&=\displaystyle\frac{\varepsilon^n}{1+\varepsilon^n}(1-\sqrt{\varepsilon})-\varepsilon+o_{\varepsilon}(1)\\[12pt]
&=\displaystyle\frac{\varepsilon\cdot(\varepsilon^{n-1}-\varepsilon^n-\varepsilon^{n-\frac{1}{2}}-1)}{1+\varepsilon^n}
+o_{\varepsilon}(1) \\[12pt]
&<0,
\end{array}
\end{equation}
here we used $e^t=1+t+o_t(1)$, where $o_t(1)\rightarrow0$ as $t\rightarrow0$. \\
For $u=-2\varepsilon^{\frac{1}{2n}}$, an easy calculation leads to
\begin{equation}\label{9}
\begin{array}{lll}
Q(-2\varepsilon^{\frac{1}{2n}})
&=\displaystyle\frac{2^{2n}\varepsilon}{1+2^{2n}\varepsilon}e^{ -2\varepsilon^{\frac{1}{2n}} }-\varepsilon\\[12pt]
&=\displaystyle\frac{2^{2n}\varepsilon}{1+2^{2n}\varepsilon}
(1-2\varepsilon^{\frac{1}{2n}})-\varepsilon+o_{\varepsilon}(1)\\[12pt]
&=\displaystyle\frac{\varepsilon(2^{2n}-2^{2n+1}\varepsilon^{\frac{1}{2n}}-2^{2n}\varepsilon)}{1+2^{2n}\varepsilon}
+o_{\varepsilon}(1)\\[12pt]
&>0.
\end{array}
\end{equation}
It follows from (\ref{8})-(\ref{9}) that there exists a solution
$u_2\in( -2\varepsilon^{\frac{1}{2n}}, -\sqrt{\varepsilon} )$.\\
For $u=\sqrt{\varepsilon}$, a direct  computation gives
\begin{equation}\label{10}
\begin{array}{lll}
Q(\sqrt{\varepsilon})
&=\displaystyle\frac{\varepsilon^n}{1+\varepsilon^n}e^{\sqrt{\varepsilon}}-\varepsilon\\[10pt]
&=\displaystyle\frac{\varepsilon^n}{1+\varepsilon^n}(1+\sqrt{\varepsilon})-\varepsilon +o_{\varepsilon}(1)\\[10pt]
&=\displaystyle\frac{\varepsilon(\varepsilon^{n-1}+\varepsilon^{n-\frac{1}{2}} - \varepsilon^n -1) } {1+\varepsilon^n}+o_{\varepsilon}(1)\\[10pt]
&<0.
\end{array}
\end{equation}
For $u=\varepsilon^{\frac{1}{2n}}$, we get
\begin{equation}\label{11}
\begin{array}{lll}
Q(\varepsilon^{\frac{1}{2n}})
&=
\displaystyle\frac{\varepsilon}{1+\varepsilon}e^{ \varepsilon^{\frac{1}{2n}} }-\varepsilon\\[10pt]
&=\displaystyle\frac{\varepsilon^{\frac{1}{2n}+1}-\varepsilon^2}{1+\varepsilon}+o_{\varepsilon}(1)\\[10pt]
&>0.
\end{array}
\end{equation}
Combining (\ref{10}) and (\ref{11}), we obtain that there exists a solution $u_3\in(\sqrt{\varepsilon},\varepsilon^{\frac{1}{2n}} ).$

Due to the fact that the parameter $\varepsilon$ is sufficiently small and $n$ is an arbitrary positive integer, the computer cannot simulate a figure of the function $Q(u)$ accurately. However, for better comprehension, we provide an example with $n=3$ and $\varepsilon=0.06$. The figure of $Q(u)$ is as followings:

\begin{figure}[H]
  \centering
  \includegraphics[width=10cm,height=6cm]{figure1.png}
  \caption{Example}
\end{figure}

\noindent As shown in the figure,  the equation $Q(u)=0$ has three constant solutions in this case.
This result is consistent with our analysis. Moreover, from the discussion above, we know that the constant solutions of $Q(u)=0$ are
\begin{equation}\label{constantsolution}
u_1\in(\ln\varepsilon, 1+\ln\varepsilon),
\hspace{0.5cm}
u_2\in(-2\varepsilon^{\frac{1}{2n}}, -\sqrt{\varepsilon}),
\hspace{0.5cm}
u_3\in(\sqrt{\varepsilon},\varepsilon^{\frac{1}{2n}} ),
\end{equation}
respectively. For convenience,  we have drawn a schematic diagram to further understand the properties of $Q(u)$. Namely,
\begin{figure}[H]
  \centering
  \includegraphics[width=10cm,height=6cm]{figure2.png}
  \label{12}\caption{Q(u)}
\end{figure}

{\bf Step 2.} Next, we are ready to show that the equation $Q(u)=0$ only has constant solutions.
If $u$ satisfies (\ref{4}), then integration by parts gives
\begin{equation*}
0=-\int_V\Delta u d\mu=\displaystyle\int_V\frac{u^{2n}}{1+u^{2n}}e^u d\mu-\int_V\varepsilon d\mu,
\end{equation*}
this implies
\begin{equation*}\label{13}
\begin{array}{lll}
\varepsilon |V|
=\displaystyle\int_V\frac{u^{2n}}{1+u^{2n}}e^u d\mu
=\displaystyle\sum_{i=1}^m\mu(x_i)\displaystyle\frac{u(x_i)^{2n}}{1+u(x_i)^{2n}}e^{u(x_i)}
\geq \mu_{\min} \displaystyle\frac{u(x_i)^{2n}}{1+u(x_i)^{2n}}e^{u(x_i)},
\end{array}
\end{equation*}
and hence
\begin{equation}\label{14}
K(u(x_i)):=\displaystyle\frac{u(x_i)^{2n}}{1+u(x_i)^{2n}}e^{u(x_i)}-\displaystyle\frac{\varepsilon|V|}{ \mu_{\min}}\leq0.
\end{equation}
As proved in step 1, we obtain that there exists three solutions to $K(u(x_i))=0$, that is
\begin{equation*}
v_1\in\left(\ln\varepsilon, \ln\varepsilon+\frac{2|V|}{\mu_{\min} }\right),
\hspace{0.5cm}
v_2\in\left(-2\frac{|V|}{\mu_{\min}}\varepsilon^{\frac{1}{2n}}, -\sqrt{\varepsilon}\right),
\hspace{0.5cm}
v_3\in\left(\sqrt{\varepsilon},\frac{|V|}{\mu_{\min} }\varepsilon^{\frac{1}{2n}}\right),
\end{equation*}
respectively. Similarly, we can draw a schematic diagram for $K(u(x_i))$ as below,
\begin{figure}[H]
  \centering
  \includegraphics[width=10cm,height=6cm]{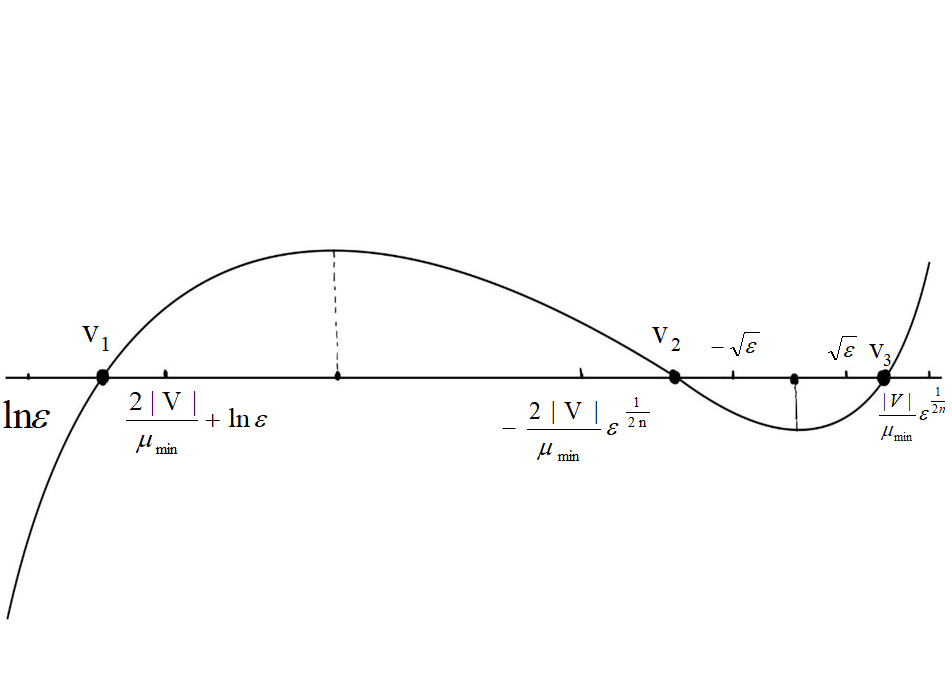}
  \caption{K(u)}\label{figure3}
\end{figure}

\noindent According to Figure \ref{figure3} and (\ref{14}), we deduce that
\begin{equation}\label{15}
u(x_i)\in ( -\infty, v_1 ] \cup [v_2,v_3].
\end{equation}
Based on (\ref{15}), we discuss the cases  when the solution $u$ of (\ref{4}) is located in different intervals.\\

{\bf Case 1.} If every solution $u$ to (\ref{4}) satisfies $u<u_3$, where $u_3$ is the constant solution defined in (\ref{constantsolution}), then we claim that $u<0$.

In fact, we consider  the equation $-\Delta u=\displaystyle\frac{u^{2n}}{1+u^{2n}}e^u -\varepsilon$. Without lose of generality, we set $u(x_0)=\max_V u$. If $u(x_0)=0$, then $\Delta u=\varepsilon>0$, which contradicts to the maximum principle (Lemma 3.1 in \cite{sundegree}). On the other hand, if $u(x_0)>0$, this together with $u(x_0)<u_3$ and
Figure \ref{12} leads to
\begin{equation*}
\Delta (u_3 - u(x_0))
=\displaystyle\frac{u(x_0)^{2n}}{1+u(x_0)^{2n}}e^{u(x_0)}
-
\displaystyle\frac{u_3^{2n}}{1+u_3^{2n}}e^{u_3}<0,
\end{equation*}
which implies $0=\Delta u_3 <\Delta u(x_0)$. It is a contradiction.
Hence, Case 1 gives
\begin{equation}\label{intervel}
  u(x_i)\in ( -\infty, v_1 ] \cup [v_2,0).
\end{equation}

{\bf Subcase 1a} Assume $u$ satisfies the condition in Case 1 and $\#V=m$,
then for any point $x_i\in V$, $u(x_i)$  will only be located in one interval of (\ref{intervel}). Namely,
either $u(x_i)\in ( -\infty, v_1 ]$ for any $i=1,\cdots,m$ or  $u(x_i)\in [v_2,0)$ for any $i=1,\cdots,m$.

To proceed, we prove it by contradiction.
Assume there exists a point $x_i$ such that  $u(x_i)\in ( -\infty, v_1 ]$.
By the connectedness of graph $G=(V, E)$, we have $x_j\in [v_2,0) $, which  satisfy $x_i \sim x_j$ and
\begin{equation}\label{16}
\begin{array}{lll}
-\Delta u(x_j)
=
\displaystyle\frac{u(x_j)^{2n}}{1+u(x_j)^{2n}}e^{u(x_j)}-\varepsilon
\rightarrow 0
\end{array}
\end{equation}
as $\varepsilon\rightarrow 0$. However, the definition of $\Delta$ gives
\begin{align*}\label{17}
\Delta u(x_j)
&=
\frac{1}{\mu_{x_j}}
\left(
\sum_{y\sim{x_j}, u(y)\in (-\infty, v_1 ]}
w_{x_jy}(  u(y)- u(x_j)  )
+
\sum_{y\sim{x_j}, u(y)\in [v_2,0)}
w_{x_jy}(  u(y)- u(x_j)  )
\right)\\[15pt]
&\rightarrow-\infty\nonumber
\end{align*}
as $\varepsilon\rightarrow 0$, which contradicts to (\ref{16}). Thus $u(x_i)$ for any $i=1,\cdots,m$ will only be located in one interval of (\ref{intervel}).\\

{\bf Subcase 1b} If  $u(x)\in ( -\infty, v_1 ]$, then $u(x)\equiv u_1$; If $u(x)\in [v_2,0)$,
then $u(x)\equiv u_2$, where $u_1$ and $u_2$ are constant solutions to the equation (\ref{4}).

In fact, if $u(x)\in ( -\infty, v_1 ]$ and $u(x)\neq u_1$, then note that
\begin{equation*}
\left\{
\begin{array}{lll}
\Delta u=-\displaystyle\frac{u^{2n}}{1+u^{2n}}e^u +\varepsilon,\\[15pt]
\Delta u_1=-\displaystyle\frac{u_1^{2n}}{1+u_1^{2n}}e^{u_1} +\varepsilon,
\end{array}
\right.
\end{equation*}
which implies
\begin{equation*}
\begin{array}{lll}
|\Delta(u-u_1)|
&=
\left|
\displaystyle\frac{u_1^{2n}}{1+u_1^{2n}}e^{u_1}
-
\displaystyle\frac{u^{2n}}{1+u^{2n}}e^u
\right|\\[20pt]
&\leq
\max_V\left\{
\left|\displaystyle\frac{2nu_1^{2n-1}+u_1^{2n}(1+u_1^{2n}) }{ ({1+u_1^{2n}})^2 }\right|e^{u_1},
\left|\displaystyle\frac{2nu^{2n-1}+u^{2n}(1+u^{2n}) }{ ({1+u^{2n}})^2 }\right|e^{u}
\right\} |u-u_1|\\[20pt]
&\leq C\varepsilon |u-u_1|,
\end{array}
\end{equation*}
where in the last step we have used the fact
\begin{equation*}
u(x)\in ( -\infty, v_1 ]\rightarrow-\infty
\hspace{0.2cm}\mathrm{as} \hspace{0.2cm}\varepsilon\rightarrow 0.
\end{equation*}
Thus, it follows from the
elliptic estimate (Lemma 3.2 in \cite{sundegree})
that
\begin{equation*}
\begin{array}{lll}
\max_V (u-u_1)-\min_V (u-u_1)
&\leq C\max_V |\Delta (u-u_1)|\\[15pt]
&\leq C\varepsilon |u-u_1|\\[15pt]
&\leq C\varepsilon
\left(\max_V (u-u_1)-\min_V (u-u_1)+|\min_V (u-u_1)|\right).
\end{array}
\end{equation*}
Let $\varepsilon$ be sufficiently  small such that
$\max_V (u-u_1)\leq \min_V (u-u_1)+\frac{1}{2}|\min_V (u-u_1)|$, then
combining this  inequality  with the equation (\ref{4}) leads to
$\min_V (u-u_1)<0<\max_V (u-u_1)$. Therefore, we obtain
\begin{equation*}
0<\max_V(u-u_1)\leq\min_V (u-u_1)-\frac{1}{2}\min_V (u-u_1)=\frac{1}{2}\min_V (u-u_1)<0,
\end{equation*}
which gives a contradictory estimation. Hence, we get $u(x)\equiv u_1$ when $u(x)\in ( -\infty, v_1 ]$. On the other hand, we can also use the similar idea of the above argument
to conclude $u(x)\equiv u_2$ when $u(x)\in [v_2,0)$.

In conclusion,
we can derive that equation (\ref{4}) only has
constant solutions $u_1\in ( -\infty, v_1 ]$ and $u_2\in [v_2,0)$  when $u<u_3$. Next, we study the behaviour of $u\geq u_3$.\\

{\bf Case 2.} If $u\geq u_3$, then we have $u\equiv u_3$.

For otherwise, considering the equation (\ref{4}), there is
\begin{equation*}
\begin{array}{lll}
0&=-\int_V\Delta u d\mu\\[15pt]
&=\displaystyle\int_V\left(\frac{u^{2n}}{1+u^{2n}}e^u-\varepsilon\right) d\mu\\[15pt]
&>\displaystyle\int_V\left(\frac{u_3^{2n}}{1+u_3^{2n}}e^{u_3}-\varepsilon\right) d\mu\\[15pt]
&=-\displaystyle\int_V\Delta u_3 d\mu\\[15pt]
&=0,
\end{array}
\end{equation*}
which is a contradiction.\\

{\bf Case 3.} If there exist $x_i, x_j\in V$ such that $u(x_i)\leq u_3$ and $u(x_j)\geq u_3$, then we obtain $u\equiv u_3$.

Since $u(x_i)\leq u_3\in(0,v_3]$ and the connectedness of graph $G=(V, E)$, which was used in  Subcase 1a,
we can deduce that $u(x_i)\in [v_2,u_3]$. This combined with the fact that  $u(x_j)\in[u_3,v_3]$ leads to $u(x)\in[v_2,v_3]$.
Furthermore, we have $u(x)\in[v_2,v_3]\rightarrow0$ as $\varepsilon\rightarrow0$. If $u(x)\neq u_3$, then we obtain
\begin{equation*}
\begin{array}{lll}
|\Delta (u-u_3)|
&=
\left|
\displaystyle\frac{u_3^{2n}}{1+u_3^{2n}}e^{u_3}
-
\displaystyle\frac{u^{2n}}{1+u^{2n}}e^u
\right|\\[20pt]
&\leq C\varepsilon |u-u_3|.
\end{array}
\end{equation*}
From these and follows the idea of Subcase 1b, we can derive that $u\equiv u_3$.

To summarise, combining Case 1, Case 2 and Case 3 yield Step 2, namely, the equation
\begin{equation*}
-\Delta u=\displaystyle\frac{u^{2n}}{1+u^{2n}}e^u -\varepsilon,
\end{equation*}
only has three constant solutions. \\

{\bf Step 3.} Let us complete the remaining part of Lemma \ref{degree1},
we calculate the  Brouwer degree of $F(u):=-\Delta u-\displaystyle\frac{u^{2n}}{1+u^{2n}}e^u +\varepsilon $.
Now, for $Q(u):=\displaystyle\frac{u^{2n}}{1+u^{2n}}e^u -\varepsilon$, a direct calculation leads to
\begin{equation}\label{df}
D Q(u)=
\displaystyle\frac{ 2nu^{2n-1}+u^{2n}+u^{4n}}{ (1+u^{2n})^2  }e^u
\end{equation}
and
\begin{equation*}
D^2     Q(u)
=
\displaystyle\frac{ 2nu^{2n-1}+u^{2n}+u^{4n}}{ (1+u^{2n})^2  }e^u
+
\frac{\left(2n(2n-1)u^{2n-2}+2nu^{2n-1}+4nu^{4n-1}\right)(1+u^{2n})-4nu^{2n-1}}{1+u^{2n}}e^u.
\end{equation*}
When  $u$ belongs to the interval in (\ref{constantsolution}), we get
\begin{equation}\label{ddf}
D^2 Q(u)>0,
\end{equation}
which implies that $DQ(u)$ is monotonic. Also, by an easy calculation and for sufficiently  small $\varepsilon>0$, we derive that
if $u_1\in(\ln\varepsilon, 1+\ln\varepsilon)$, then
\begin{equation}\label{u11}
DQ(\ln\varepsilon)
=
\displaystyle\frac{(\ln\varepsilon)^{2n} \left( \displaystyle\frac{2n}{\ln\varepsilon}+1+(\ln\varepsilon)^{2n} \right) \cdot\varepsilon}{\left(1+(\ln\varepsilon)^{2n} \right)^2}>0
\end{equation}
and
\begin{equation}\label{u12}
DQ(1+\ln\varepsilon)
=
\displaystyle\frac{(1+\ln\varepsilon)^{2n} \left(\displaystyle\frac{2n}{1+\ln\varepsilon}+(1+\ln\varepsilon)^{2n} +1\right)  \cdot e \cdot \varepsilon}{\left(1+(1+\ln\varepsilon)^{2n} \right)^2 }
>0.
\end{equation}
Thus, by combining (\ref{ddf}), (\ref{u11}) and (\ref{u12}), we conclude that
\begin{equation}\label{u1}
DQ(u_1)>0.
\end{equation}
On the other hand, if  $u_2\in(-2\varepsilon^{\frac{1}{2n}}, -\sqrt{\varepsilon}),$ then we can obtain

\begin{equation}\label{u21}
DQ(-2\varepsilon^{\frac{1}{2n}} )
=
\displaystyle\frac{\varepsilon^{ 1-\frac{1}{2n} } \left(-n\cdot2^{2n}+2^{2n}\varepsilon^{\frac{1}{2n}}
+2^{4n}\varepsilon^{ 1+\frac{1}{2n} }+n\cdot2^{2n+1}\varepsilon^{\frac{1}{2n} }-2^{2n+1}\varepsilon^{\frac{1}{n}}
-2^{4n+1}\varepsilon^{1+\frac{1}{n}}\right)}
{  \left( 1+2^{2n}\varepsilon \right)^2}<0
\end{equation}
and

\begin{equation}\label{u22}
DQ( -\sqrt{\varepsilon} )
=
\displaystyle\frac{\varepsilon^{n-\frac{1}{2}} \left(-2n+\varepsilon^{\frac{1}{2}}+ \varepsilon^{\frac{1}{2}+n}
+2n\varepsilon^{\frac{1}{2}}-\varepsilon-\varepsilon^{n+1} \right)}
{\left( 1+\varepsilon^n \right)^2 }
<0.
\end{equation}
Now, it follows from (\ref{ddf}) and
(\ref{u21})-(\ref{u22}) that
\begin{equation}\label{u2}
DQ(u_2)<0.
\end{equation}
Similarly, if  $u_3\in(\sqrt{\varepsilon},\varepsilon^{\frac{1}{2n}} )$,  a simple calculation gives
\begin{equation}\label{u31}
DQ(\sqrt{\varepsilon} )
=
\displaystyle\frac{2n(\sqrt{\varepsilon})^{2n-1}+(\sqrt{\varepsilon})^{2n}+(\sqrt{\varepsilon})^{4n}}
{\left(1+\varepsilon^n\right)^2}e^u>0
\end{equation}
and
\begin{equation}\label{u32}
DQ(\varepsilon^{\frac{1}{2n}})
=
\displaystyle\frac{2n\varepsilon^{\frac{2n-1}{2n}}+\varepsilon^2+\varepsilon}{(1+\varepsilon)^2}e^u
>0.
\end{equation}
A combination of (\ref{ddf}) and  (\ref{u31})-(\ref{u32})  yields
\begin{equation}\label{u3}
DQ(u_3)>0.
\end{equation}
Observe that $-\Delta$ is a nonnegative matrix and $0$ is an eigenvalue of $-\Delta$ with multiplicity
one, then  using the homotopy invariance of the Brouwer degree with (\ref{u1}), (\ref{u2}) and (\ref{u3}), we can deduce
\begin{equation*}
\begin{array}{lll}
d_{h,c}
&=\mathrm{sgn\hspace{0.1cm}det}(DF(u_1))+\mathrm{sgn\hspace{0.1cm} det}(DF(u_2))+\mathrm{sgn\hspace{0.1cm} det}(DF(u_3))\\[12pt]
&=\mathrm{sgn\hspace{0.1cm} det}(-\Delta-DQ(u_1))+\mathrm{sgn\hspace{0.1cm} det}(-\Delta-DQ(u_2))+\mathrm{sgn\hspace{0.1cm} det}(-\Delta-DQ(u_3))\\[12pt]
&=-1+1-1\\[12pt]
&=-1.
\end{array}
\end{equation*}
Therefore, the proof of Lemma \ref{degree1} is completed.
\end{proof}

Next, we compute the Brouwer degree for the case  $c=0$, and we state the following lemma.

\begin{Lemma}\label{degree2}
If $c=0$, then for the connected finite graph $G=(V,E)$  with sign changed function $h\in V^{\mathbb{R}}$ satisfying  $\int_V h d\mu<0$, we have $d_{h,c}=-1$.
\end{Lemma}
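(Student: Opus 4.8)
The plan is to avoid computing a local degree at the constant solution and instead reduce the case $c=0$ to the already-settled case $c>0$ by a homotopy in the parameter $c$. First I would record the relevant constant solution: when $c=0$, a constant $u\equiv a$ solves the equation only if $h(x)\frac{a^{2n}}{1+a^{2n}}e^{a}=0$ for every $x$, and since $h$ changes sign (so $h\not\equiv 0$) this forces $a=0$; thus $u\equiv 0$ is the unique constant solution. The crucial difference from Lemma \ref{degree1} is that this solution is \emph{degenerate}: since $\frac{d}{du}\big(\frac{u^{2n}}{1+u^{2n}}e^{u}\big)$ vanishes at $u=0$, we get $DF(0)=-\Delta$, whose kernel is the constants, so $\det DF(0)=0$. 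Hence the sign-of-determinant computation of Step 3 in Lemma \ref{degree1} cannot be applied directly, which is exactly what forces a homotopy argument and makes $c=0$ genuinely different.

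Next I would introduce the homotopy
\begin{equation*}
\Phi(u,t)=-\Delta u-h(x)\frac{u^{2n}}{1+u^{2n}}e^{u}+t\varepsilon,\qquad t\in[0,1],
\end{equation*}
with $\varepsilon>0$ small and fixed, so that $\Phi(\cdot,0)=F$ is the map for $c=0$ while $\Phi(\cdot,1)$ is the map for $c=\varepsilon>0$. Since $h$ changes sign we have $\max_V h>0$, so Lemma \ref{degree1} applies at the endpoint $t=1$ and gives $\mathrm{deg}(\Phi(\cdot,1),B_R,0)=-1$. In particular the delicate ``only three constant solutions'' analysis is not redone here; it is absorbed into the endpoint via Lemma \ref{degree1}. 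Therefore, once I show that all solutions of $\Phi(\cdot,t)=0$ are uniformly bounded in $t$, the homotopic invariance (Lemma \ref{homo}) yields $d_{h,0}=\mathrm{deg}(\Phi(\cdot,0),B_R,0)=\mathrm{deg}(\Phi(\cdot,1),B_R,0)=-1$.

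The heart of the argument, and the step I expect to be the \emph{main obstacle}, is this uniform bound. I cannot simply quote Lemma \ref{compactness}, because along the homotopy $c_t=t\varepsilon$ takes arbitrarily small positive values and so violates its hypothesis (3), namely $c\geq A^{-1}$. Instead I would argue by contradiction: if solutions $u_n$ at parameters $t_n$ satisfied $\|u_n\|\to\infty$, then passing to a subsequence with $c_n=t_n\varepsilon\to c_\ast\in[0,\varepsilon]$ and invoking the blow-up trichotomy of Lemma \ref{brezis}, I must rule out alternatives (2) and (3). Alternative (2), uniform divergence to $-\infty$, is excluded by integrating the equation: setting $w_n=u_n-\min_V u_n$ one finds $w_n\to 0$, hence $e^{-\min_V u_n}\int_V h f(u_n)\,d\mu\to\int_V h\,d\mu<0$, while the left-hand side equals $e^{-\min_V u_n}c_n|V|\geq 0$; this contradiction crucially uses $\int_V h\,d\mu<0$ together with $c_n\geq 0$. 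Alternative (3), blow-up at a zero of $h$, is excluded because integration gives $\int_V h f(u_n)\,d\mu=c_n|V|$ bounded, and since $u_n$ stays bounded on $\{h>0\}$ this forces $\int_V |h| f(u_n)\,d\mu$, and thus $\|\Delta u_n\|_{L^1(V)}$, to be bounded; the elliptic estimate then yields $\max_V u_n\le\min_V u_n+C$, which combined with the lower bound makes $u_n\to+\infty$ uniformly, contradicting boundedness on the nonempty set $\{h>0\}$. Only alternative (1) survives, giving the uniform bound and closing the homotopy.
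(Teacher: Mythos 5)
Your proposal is correct and follows essentially the same route as the paper: a homotopy in the constant term connecting $c=0$ to a positive value (the paper uses $c=t$, you use $c=t\varepsilon$, which is immaterial), with the endpoint degree supplied by Lemma \ref{degree1} and the uniform bound established by running the trichotomy of Lemma \ref{brezis} directly, ruling out uniform divergence to $-\infty$ via the integral identity together with $\int_V h\,d\mu<0$ and ruling out blow-up at a zero of $h$ via the $L^1$ bound on $\Delta u$, the elliptic estimate, and the nonemptiness of $\{h>0\}$. Your additional observation that $u\equiv 0$ is the unique constant solution at $c=0$ and is degenerate ($DF(0)=-\Delta$), which is what forces the homotopy rather than a direct sign-of-determinant computation, is accurate and a useful motivation the paper leaves implicit.
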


\begin{proof}
Let $u_t\in V^{\mathbb{R}}$  solve
\begin{equation*}
-\Delta u_t
=
\displaystyle h \frac{u_t^{2n}}{1+u^{2n}_t}e^{u_t}-t, \hspace{0.2cm} \forall t\in [0,1].
\end{equation*}
Firstly, we claim that there exists some positive constant $C>0$ such that
\begin{equation*}
\max_V |u_t|\leq C.
\end{equation*}
Otherwise, applying Lemma \ref{brezis}  yields that  either
$u_t$ converges to $-\infty$ uniformly, or  there exists $x_0\in V$ such that $h(x_0)=0$
and $u_t(x_0)$ converges to $-\infty$. Furthermore, $u_t$ is uniformly bounded in
$\{x\in V:h(x)>0\}$,  but $u_t$ is only uniformly bounded from below in $V$.

If $u_t$ converges to $-\infty$ uniformly, then the proof  of Lemma \ref{compactness}
implies that $u_t-\min_V u_t$ converges to $0$ uniformly. This implies that
\begin{equation*}
0>\int_V h d\mu=\lim\limits_{t\rightarrow0}\displaystyle\int_V h \displaystyle\frac{(u_t-\min_V u_t)^{2n}}{1+(u_t-\min_V u_t)^{2n}}e^{u_t-\min_V u_t}d\mu
=
\lim\limits_{t\rightarrow0}\displaystyle\int_V t e^{-\min_V u_t}d\mu
\geq 0,
\end{equation*}
which leads to a contradiction. For the other case, we arrive at
\begin{equation*}
\begin{array}{lll}
\displaystyle\int_V
|h| \displaystyle\frac{u_t^{2n}}{1+u^{2n}_t}e^{u_t}d\mu
&=
\displaystyle\int_{\{x\in V:h(x)>0\}} h \frac{u_t^{2n}}{1+u^{2n}_t}e^{u_t}d\mu
-
\displaystyle\int_{\{x\in V:h(x)<0\}} h \frac{u_t^{2n}}{1+u^{2n}_t}e^{u_t}d\mu\\[12pt]
&=2\displaystyle\int_{\{x\in V:h(x)>0\}} h \frac{u_t^{2n}}{1+u^{2n}_t}e^{u_t}d\mu
-
\displaystyle\int_{V} h \frac{u_t^{2n}}{1+u^{2n}_t}e^{u_t}d\mu\\[12pt]
&\leq C,
\end{array}
\end{equation*}
then it follows from the elliptic estimate (Lemma 3.2 in \cite{sundegree}) that
\begin{equation*}
\max_V u_t- \min_V u_t  \leq  C\max_V |\Delta u_t|\leq C,
\end{equation*}
which together with $\max_V u_t\geq u(x_0)\rightarrow+\infty $ leads to
$\min_V u_t\rightarrow+\infty $. Combining this
with (\ref{omega}) yields $\Omega=\emptyset$. Namely, we have $h(x)\leq0$, which is a contradiction. Then, applying the homotopy invariance of the Brouwer degree yields
\begin{equation*}
  d_{h,0}=-1.
\end{equation*}
\end{proof}

Finally, we discuss the case $c<0$ and compute the Brouwer degree in this case.

\begin{Lemma}\label{degree3}
If $c<0$,
then for the connected finite graph $G=(V,E)$ with
$h<0$ and $\# V=m$,
we have
\begin{equation*}
d_{h,c}=(-1)^m.
\end{equation*}
\end{Lemma}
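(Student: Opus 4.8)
The plan is to run the same three-step scheme as in Lemma~\ref{degree1}, now for the sign pattern $c<0$, $h<0$. Since such a pair satisfies hypothesis~(5) of Lemma~\ref{compactness}, I would connect the given $(h,c)$ to a constant model along
\[
-\Delta u_t=\big((1-t)h-t\big)\frac{u_t^{2n}}{1+u_t^{2n}}e^{u_t}-\big((1-t)c-t\varepsilon\big),\qquad t\in[0,1],
\]
with $\varepsilon>0$ small. The coefficient $(1-t)h-t$ stays strictly negative and the constant term $(1-t)c-t\varepsilon$ stays strictly negative for every $t$, so Lemma~\ref{compactness} yields a uniform a priori bound and hence $0\notin F_t(\partial B_R)$ for $R$ large. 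By the homotopic invariance (Lemma~\ref{homo}) it then suffices to compute $d_{h,c}$ for the model map attached to $-\Delta u=-\frac{u^{2n}}{1+u^{2n}}e^{u}+\varepsilon$, that is $h\equiv-1$, $c=-\varepsilon$.

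\textbf{Step 2 (all solutions are constant).} For the model equation a constant solution $s$ solves $\frac{s^{2n}}{1+s^{2n}}e^{s}=\varepsilon$, which is exactly the equation analysed in Step~1 of Lemma~\ref{degree1}; hence there are precisely the three constants $u_1\in(\ln\varepsilon,1+\ln\varepsilon)$, $u_2\in(-2\varepsilon^{1/2n},-\sqrt\varepsilon)$, $u_3\in(\sqrt\varepsilon,\varepsilon^{1/2n})$. The decisive point, as the paper stresses, is to show every solution is one of these constants. I would reproduce Step~2 of Lemma~\ref{degree1}: integrating the equation over $V$ produces the pointwise constraint confining each $u(x_i)$ to $(-\infty,v_1]\cup[v_2,v_3]$; the connectivity of $G$ then forbids adjacent vertices from lying in the far-left interval and the central interval simultaneously (otherwise $\Delta u$ at the shared vertex would blow up as $\varepsilon\to0$, contradicting the equation), so all vertices sit in a single interval; and the elliptic estimate (Lemma 3.2 in \cite{sundegree}) upgrades this to $u\equiv u_1$, $u\equiv u_2$, or $u\equiv u_3$.

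\textbf{Step 3 (degree) and the main obstacle.} With only non-degenerate constant solutions, $d_{h,c}=\sum_{i}\mathrm{sgn}\,\det DF(u_i)$, where, because $h\equiv-1$,
\[
DF(u_i)=-\Delta-h\,DQ(u_i)I=-\Delta+DQ(u_i)I,
\]
so $\mathrm{sgn}\,\det DF(u_i)=\prod_{j}\mathrm{sgn}\big(\lambda_j+DQ(u_i)\big)$ against the spectrum $0=\lambda_0<\lambda_1\le\cdots\le\lambda_{m-1}$ of $-\Delta$. Using $D^2Q>0$ on the three intervals (so that $DQ$ is monotone there) together with the endpoint signs already recorded in (\ref{u1}), (\ref{u2}), (\ref{u3}), one gets $DQ(u_1)>0$, $DQ(u_2)<0$, $DQ(u_3)>0$, so $u_1$ and $u_3$ give positive definite linearizations. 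The genuinely delicate contribution is $u_2$: there $DF(u_2)=-\Delta-|DQ(u_2)|I$, whose determinant has sign $(-1)^{k}$ with $k=\#\{j:\lambda_j<|DQ(u_2)|\}$, so the entire computation turns on locating $|DQ(u_2)|$ relative to the eigenvalues of $-\Delta$. This spectral bookkeeping --- precisely the place where the sign $h<0$ changes the outcome compared with Lemma~\ref{degree1} --- is where I expect the real work to lie. I would attack it by exploiting that only the sign of $h$ is prescribed, normalizing the coefficient in the homotopy to be sufficiently negative so that $|h\,DQ(u_2)|$ overtakes $\lambda_{m-1}$ and the linearization at the decreasing-branch solution becomes negative definite, contributing $(-1)^m$; reconciling this with the contributions of $u_1$ and $u_3$ into the single value $d_{h,c}=(-1)^m$ is the crux of the argument.
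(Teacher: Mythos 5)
Your Steps 1 and 2 coincide with the paper's proof: the same admissible interpolation (the paper takes $-\Delta u_t=\bigl(-t+(1-t)h\bigr)\frac{u_t^{2n}}{1+u_t^{2n}}e^{u_t}-(1-t)c+t\varepsilon$, controlled by Lemma \ref{compactness}), and the paper likewise disposes of Step 2 by invoking the reduction-to-constants argument of Lemma \ref{degree1} verbatim. The genuine gap is in your Step 3. Your premise that the computation ``turns on locating $|DQ(u_2)|$ relative to the eigenvalues of $-\Delta$,'' with the plan of renormalizing $h$ so that $|h\,DQ(u_2)|$ overtakes $\lambda_{m-1}$, fails on two counts. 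First, by the endpoint estimates (\ref{u21})--(\ref{u22}) together with the convexity (\ref{ddf}), $|DQ(u_2)|=O(\varepsilon^{1-\frac{1}{2n}})\rightarrow 0$ as $\varepsilon\rightarrow 0$; and the small-$\varepsilon$ regime is forced on you, because the entire Step 2 machinery (the connectivity argument and the contraction-type estimate $|\Delta(u-u_i)|\leq C\varepsilon|u-u_i|$) only works as $\varepsilon\rightarrow 0$. Hence automatically $|DQ(u_2)|<\lambda_1$, your $k$ equals $1$, and $u_2$ contributes $-1$, not $(-1)^m$. Second, the rescue you propose is self-defeating: by homotopic invariance (Lemma \ref{homo}) the total degree is independent of any normalization of $h$ within the class admissible under Lemma \ref{compactness}, so since your count at $h\equiv-1$ gives $+1-1+1=+1$, no amount of spectral engineering at $u_2$ can turn the sum into $(-1)^m$; pushing $|DQ(u_2)|$ past $\lambda_{m-1}$ would simply take you outside the regime where all solutions are known to be constants.

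The idea your proposal is missing is that the paper reaches $(-1)^m$ by computing the degree of the \emph{sign-reversed} map: in Lemma \ref{degree3} it sets $F(u)=\Delta u-Q(u)$ (note $+\Delta$, the negative of the map (\ref{F}) specialized to $h\equiv-1$, $c=-\varepsilon$). Then $DF(u_i)=\Delta-DQ(u_i)I$ has eigenvalues $-\lambda_j-DQ(u_i)$, a small perturbation of the negative semidefinite operator $\Delta$: since $DQ(u_1),DQ(u_3)>0$ are small, all $m$ eigenvalues are negative at $u_1$ and $u_3$ (sign $(-1)^m$ each), while at $u_2$ only the simple zero eigenvalue flips to $-DQ(u_2)>0$ (sign $(-1)^{m-1}$), and $(-1)^m+(-1)^{m-1}+(-1)^m=(-1)^m$. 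Your computation with $DF(u_i)=-\Delta+DQ(u_i)I$ is in itself correct, but it is the degree of the map exactly as defined in (\ref{F}); since $\mathrm{deg}(-G,B_R,0)=(-1)^m\,\mathrm{deg}(G,B_R,0)$, the two answers differ precisely by the factor $(-1)^m$, and the stated value $(-1)^m$ is tied to the sign convention the paper adopts inside this lemma. Carried out honestly under the convention you chose, your route can only ever produce $+1$; the fix is not a finer analysis of $u_2$ but recognizing (as the paper does, tersely, when it writes ``Set $F(u)=\Delta u-Q(u)$'') which map's degree is being computed.
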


\begin{proof}

If $u_t\in V^{\mathbb{R}}$  is a solution of
\begin{equation*}
-\Delta u_t
=\left(-t+(1-t)h\right)\frac{u_t^{2n}}{1+u_t^{2n}}e^{u_t}-(1-t)c+t\varepsilon,
\hspace{0.2cm}  \forall t\in[0,1],
\end{equation*}
then applying Lemma \ref{compactness} yields $\max_V|u_t|\leq C.$ According to the homotopy invariance of the Brouwer degree, we consider
\begin{equation*}
\Delta u
=
\displaystyle  \frac{u^{2n}}{1+u^{2n}}e^{u}-\varepsilon:=Q(u).
\end{equation*}
Set $F(u)= \Delta u - Q(u).$  Similar to the proof of Lemma \ref{degree1}, we can conclude
\begin{equation*}
d_{h,c}=(-1)^m.
\end{equation*}

\noindent This implies the desired result of the lemma.
\end{proof}

\noindent{\bf Remark.}
If $c<0$ and $h$ is sign-changing, we need pay more attention to the existence of solutions to the  equation
\begin{equation}\label{remark}
-\Delta u=\displaystyle h\frac{u^{2n}}{1+u^{2n}}e^u -c.
\end{equation}
In the following content,  we provide some examples to illustrate the existence and non-existence  of  solutions to equation (\ref{remark}) under different assumptions.

{\bf Example 1.}
Without loss of generality, we assume $\mu=w_{xy}= 1$ and $c=-0.01$ with $n=3$.
Let $V=\{x_1,x_2\}$ with $h(x_1)=-5$ and  $h(x_2)=20$.
For convenience, we denote $u(x_1)=x$ and $u(x_2)=y$. Then the equation (\ref{remark}) is equivalent to
the following equation,
\begin{equation*}
\left\{
\begin{array}{lll}
x=y-5\displaystyle\frac{x^{6}}{1+x^{6}}e^{x}+0.01,\\[12pt]
y=x+20\displaystyle\frac{y^{6}}{1+y^{6}}e^{y}+0.01.
\end{array}
\right.
\end{equation*}
the picture of the equation above is showed as blow.
\begin{figure}[H]
  \centering
  \includegraphics[width=13cm,height=5cm]{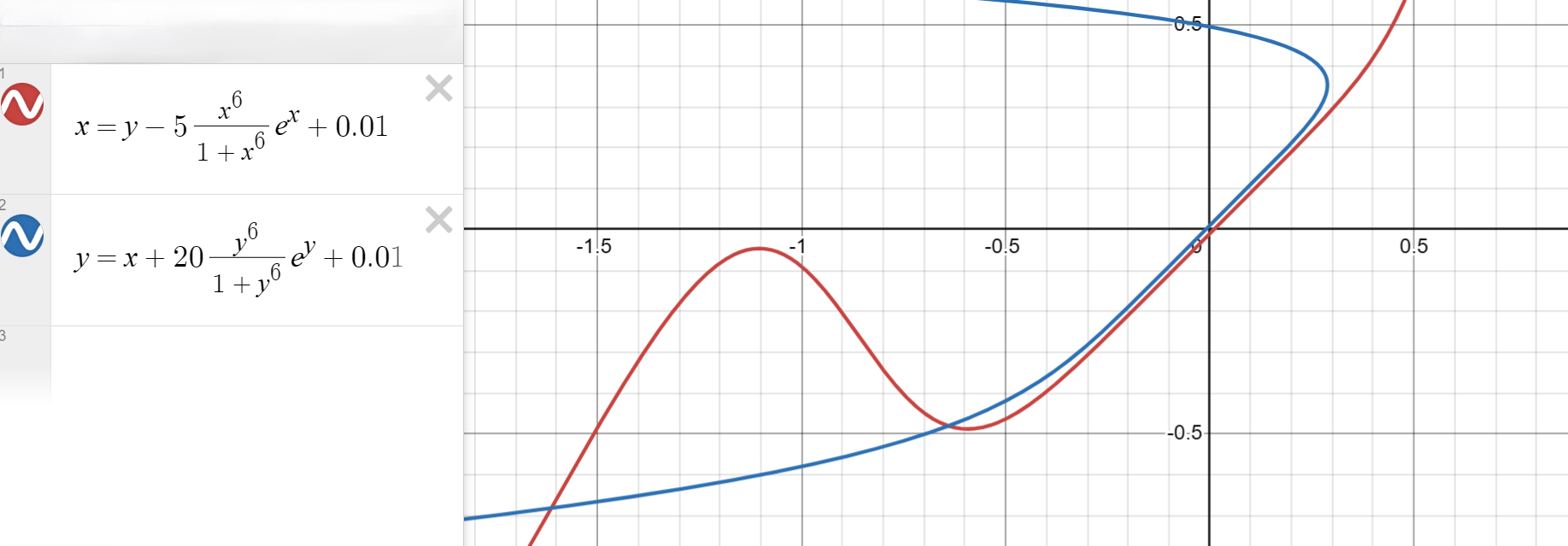}
  \caption{Example 1}\label{examplefigure1}
\end{figure}
From Figure  \ref{examplefigure1}, it is clear  that there exist two solutions to the equation (\ref{remark}) in this situation. Now, we provide an example to demonstrate that (\ref{remark})   does not have a solution in certain special cases.

{\bf Example 2.}
Similarly,  we assume $\mu=w_{xy}= 1$ and $c=-5$ with $n=3$.
Let $V=\{x_1,x_2\}$ with $h(x_1)=-1$ and  $h(x_2)=2$.
For convenience, we denote $u(x_1)=x$ and $u(x_2)=y$. Then the equation  (\ref{remark}) is equivalent to
the following equation,
\begin{equation*}
\left\{
\begin{array}{lll}
x=y-\displaystyle\frac{x^{6}}{1+x^{6}}e^{x}+5,\\[12pt]
y=x+2\displaystyle\frac{y^{6}}{1+y^{6}}e^{y}+5.
\end{array}
\right.
\end{equation*}
and we can obtain the corresponding picture,
\begin{figure}[H]
  \centering
  \includegraphics[width=13cm,height=5cm]{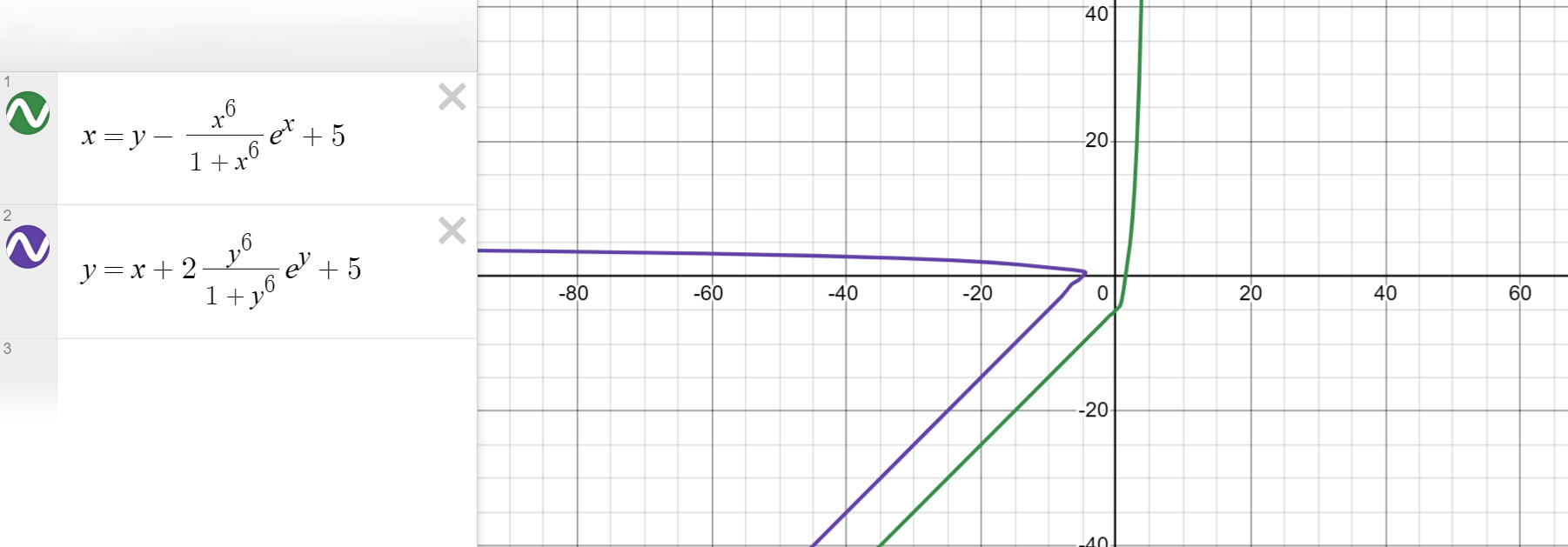}
  \caption{Example 2}\label{examplefigure2}
\end{figure}
\noindent From Figure \ref{examplefigure2}, we know  that the equation (\ref{remark}) has no solution in this situation. Actually, the  non-existence of a solution in Example 2 is also true for $c=0.01$.

Examples 1-2 imply that,
to some extent, it is totally different  from the cases
considered above to find the solutions for (\ref{remark}) when  $c<0$ and sign-changing $h$. Moreover, we observe that the existence and non-existence of
solutions to the equation (\ref{remark}) are closely related to $h$ and $c$. Hence, we need a more careful analysis to study the equation (\ref{remark}), which is interesting  that we will research in the future.

In conclusion, we obtain Theorem \ref{degree}.

{\bf Acknowledgements.}
The author is supported by the Outstanding
Innovative Talents Cultivation Funded Programs 2023 of Renmin University of China.

%

\bigskip

\newpage

\end{document}